\numberwithin{equation}{section}
\newcommand{\VV}{ \mathcal{V}}
\newcommand{\NN}{ \mathcal{N}}
\newcommand{\FF}{ \mathcal{F}}
\newcommand{\SN}{ \mathcal{S}}
\newcommand{\NSS}{\NN_{\SS}}
\newcommand{\RR}{ \Pi}
\newcommand{\DD}{ \mathcal{D}}
\newcommand{\MM}{ \mathcal{M}}
\newcommand\E{{E}}
\newcommand\bgrad{\textrm{\bf grad}}
\renewcommand\div{{\rm div}}
\newcommand\bcurl{\textrm{\bf curl}}
\newcommand\curl{\textrm{\bf curl}}
\newcommand\brot{\textrm{\bf rot}}
\newcommand\rot{\textrm{\rm rot}}
\newcommand{\dE}{\,{\rm d}\E}
\newcommand{\ds}{\,{\rm d}s}
\newcommand{\de}{\,{\rm d}e}
\newcommand{\df}{\,{\rm d}f}
\newcommand\nn{\boldsymbol n}
\renewcommand\tt{\boldsymbol t}
\newcommand\xx{\boldsymbol x}
\newcommand\pp{\boldsymbol p}
\newcommand\qq{\boldsymbol q}
\newcommand\vphi{\varphi}
\newcommand\vv{\boldsymbol v}
\newcommand\vtauf{{\boldsymbol v}^{\tau_f}}
\newcommand\cc{\boldsymbol c}
\newcommand\uu{\boldsymbol u}
\newcommand\zz{\boldsymbol z}
\newcommand\ww{\boldsymbol w}
\newcommand\bc{\boldsymbol c}
\newcommand\Y{{\mathscr V}}
\newcommand\bV{\overline{V}}
\renewcommand\bV{{\bf {V}}}
\newcommand{\calB}{ \mathcal{B}}
\newcommand{\calD}{ \mathcal{D}}
\newcommand{\calC}{ \mathcal{C}}
\renewcommand{\O}{ {\mathcal O}}
\newcommand{\Z}{ {\mathcal Z}}
\renewcommand\SS{{\mathscr S}}
\renewcommand\S{\mathscr{S}}
\newcommand\R{\mathbb{R}}
\renewcommand{\P}{ {\mathbb P}}
\newcommand{\Q}{ {\mathbb Q}}
\newtheorem{thm}{Theorem}[section]
\newtheorem{prop}[thm]{Proposition}
\newtheorem{lem}[thm]{Lemma}
\newtheorem{defn}[thm]{Definition}
\newtheorem{remark}{Remark}
\title{Serendipity Face and Edge VEM Spaces}
\author{
L. Beir\~ao da Veiga$^{[1]}$, F. Brezzi$^{[2]}$, L.D. Marini$^{[3]}$, A. Russo$^{[1]}$
}
\address{{\it Louren\c co Beir\~ao da Veiga} - Dipartimento di Matematica e Applicazioni, Universit\`a di Milano--Bicocca,
Via Cozzi 53, I-20153, Milano, Italy \\
and IMATI del CNR, Via Ferrata 1, 27100 Pavia, Italy
lourenco.beirao@unimib.it}
\address{{\it Franco Brezzi} - IMATI del CNR,
Via Ferrata 5,
27100 Pavia, Italy\\
brezzi@imati.cnr.it}
\address{{\it Luisa Donatella Marini} - Dipartimento di Matematica, Universit\`a di Pavia,
and IMATI del CNR, Via Ferrata 1, 27100 Pavia, Italy\\
marini@imati.cnr.it}
\address{{\it Alessandro Russo} - Dipartimento di Matematica e Applicazioni, Universit\`a di Milano--Bicocca,
Via Cozzi 53, I-20153, Milano, Italy\\
and IMATI del CNR, Via Ferrata 1, 27100 Pavia, Italy
alessandro.russo@unimib.it}
\date{\today}
\begin{document}

\begin{abstract}
We  extend  the basic idea of Serendipity Virtual Elements from the previous case (by the same authors)
of nodal ($H^1$-conforming) elements, to a more general framework. Then we apply the general strategy to the case of   $H(\div)$ and $H(\bcurl)$ conforming Virtual Element Methods,
in two and three dimensions.
\end{abstract}

\bigskip
\maketitle



\section{Introduction}


Virtual Element Methods (VEM) were introduced a few years ago (\cite{volley}, \cite{VEM-elasticity}, \cite{Brezzi:Marini:plates}, \cite{projectors}) as a new interpretation of Mimetic Finite Differences (MFD)  (see {\cite{MFD22},} \cite{MFD23} and the references therein) that allowed, in a suitable sense, a generalization of 
classical Finite Element Methods (FEM) to polygonal and polyhedral decompositions. More recently, they underwent rapid developments, with extension to various problems (see  \cite{variable-primal}, \cite{mixed-variables},  \cite{BeiraoLovaMora}, 
  \cite{CH-VEM}, \cite{Benedetto-VEM-2}, \cite{Berrone-VEM},  \cite{Paulino-VEM},  \cite{VemSteklov}).

Contrary to MFD,
(and similarly to FEM) Virtual Elements are a Galerkin method, using the variational formulation of the continuous problems in suitable finite dimensional spaces. Contrary to Finite Elements (and similarly
to MFD) they can be used on very general decompositions, both in 2 and 3 dimensions, and are very
robust with respect to element distortions, hanging nodes, and so on. Similarly to other methods
for polytopes (see e.g. {\cite{Arroyo-Ortiz}}, \cite{Bishop},  \cite{Floater:Acta}, 
\cite{Gillette-14}, \cite{BEM-weisser}, \cite{Sukumar:Malsch:2006}, \cite{Wachspress2016})  they use finite dimensional spaces that, within each element, contain functions that are not polynomials. 

Unlike these previous methods, however, with VEM these functions need not to be computed (not even in a rough way) inside the elements, but some of their properties (averages, polynomial projections,
and the like) are computed {\it exactly} starting from the data, and this allows (at least in problems
with constant coefficients) the construction of schemes that satisfy the Patch Test {\it exactly}.

We also point out the interesting connections of Virtual elements with several other important classes of methods
based on a split approximation of the same variables (at the boundary and in the interior), such as
the many variants of the quite successful Hybridizable Discontinuous Galerkin (HDG) (see e.g. \cite{Cockburn-Jay-Sayas},  \cite{Cockburn-IMU})
or the newer interesting Hybrid High Order methods (see e.g.  \cite{DiPietro-Ern-Lemaire}, \cite{DiPietro-Ern-1}). These connections deserve to be further
investigated, and in particular the question of {\it which approach would be preferable for each class of problems} seems, to us, of paramount important for future studies.

The $H(\div)$ and $H(\bcurl)$ conforming  variants of Virtual Elements were introduced in \cite{BFM} and successively extended in \cite{super-misti}. Their natural Mixed Finite Element counterparts are the classical  Raviart-Thomas ({\it RT}) or Brezzi-Douglas-Marini (\textit{BDM}) elements for the $H(\div)$-conforming case  and  the equally classical N\'ed\'elec elements of first and second kind ({\it N1} and {\it N2}, respectively), for the $H(\bcurl)$-conforming case. Compared to them,  Mixed Virtual Elements exhibit a much better robustness with respect
to the element geometry, but often a bigger number of degrees of freedom (see also, for instance, \cite{Bo-Bre-For} for definitions and properties of above Finite Element spaces, and Figures \ref{clas-tri-f} to
\ref{clas-qua1} here in the next sections for a comparison with VEMs). This justifies the effort to eliminate some internal degrees of freedom and, for $H(\bcurl)$-conforming polyhedrons, also some of the degrees of freedom internal to faces. We are doing this here, following a Serendipity-like strategy, in the stream of what has been done, for instance,  in \cite{A-A}, \cite{SERE-Nod}.

Here we slightly generalise the  $H(\div)$ and $H(\curl)$ conforming spaces presented in \cite{super-misti}, and identify different degrees of freedom, more suited to introduce their Serendipity variants.
We point out that in  \cite{super-misti} we concentrated on spaces and degrees of freedom that allow, on each $d$-dimensional polytope $\E$ (for $d=2$ or $3$),  the computation of the  $L^2$-projection operator on the space $(\P_k(\E))^d$ of vector valued polynomials of degree $\le k$, while here we consider also the possibility of having a so-called $B$-compatible operator (also known as {\it Fortin-type interpolator}), that, as is well known, is crucial in proving the {\it inf-sup}-condition in a number of different circumstances. 

In particular, we consider several types of vector-valued spaces, with different degrees for the boundary, for the divergence, and for the $\bcurl$, that include the polytopal analogues of {\it RT}  and  {\it N1} spaces, as well as the analogues of \textit{BDM} or {\it N2} elements and t{\it RT}he analogues of the Brezzi-Douglas-Fortin-Marini ({\it BDFM}) elements, together with many other possibilities, as it was briefly indicated at the end of \cite{super-misti}.

Here too we limit ourselves to the description of the  {\it local} spaces, on a generic polygon or polyhedron $\E$. The definition of the global spaces (on a decomposition made by several polytopes, respecting  the $H(\div)$ or                                                                                                                                                                                                                                                                                                                                                                                                                                                            the $H(\bcurl)$ conformity) is then immediate. The application of these elements to the approximation of PDE problems in mixed formulation (partly trivial, partly non trivial) will be discussed somewhere else, with error estimates and various additional properties.

Regarding, for several types of  problems, the interest of polytopal decompositions with other numerical approaches, we refer for instance to \cite{Droniou-gradient},  \cite{Fries:Belytschko:2010}, \cite{POLY-TOPOP21}, \cite{GFEM146}, \cite{MFD23}
and to the references therein.

An outline of the paper is as follows. In the next section we will recall a few basic definitions and some properties of polynomial spaces that will be useful in the sequel.  In Section \ref{seregen} we present a rather general framework that we are going to use in order to construct Serendipity-like variants of  local finite dimensional spaces. We note however that the approach goes beyond the particular case of Virtual Element Spaces, and could have an interest  of its own in other situations.   In Section \ref{face2d} we recall first the definition of our $H(\div)$-conforming 2-dimensional elements, which  slightly generalize the previous
\cite{BFM} and \cite{super-misti} cases,  and we show some comparisons with {\it RT}, \textit{BDM}, and {\it BDFM} Finite Elements).
At the end of this Section we also recall the definition of $B$-compatible interpolators, that are very useful for proving {\it inf-sup} conditions. We devote the next Section \ref{sereface} to the construction of
Serendipity 2-dimensional face elements, following the general guidelines of Section \ref{seregen}. Here, being the first application of our general framework,  the construction is given with much more details than what will be done 
 in the other cases. Section \ref{edge2d} deals with edge 2-dimensional spaces ($H(\rot)$-conforming) and their Serendipity variants. This Section
is very short, since the {\it edge-2d } case can be obtained from the {\it face-2d} case with a simple rotation of $\pi/2$. Section \ref{face3d} deals with the  $H(\div)$-conforming 3-dimensional elements: the first part, with definitions and basic properties of the spaces, based essentially on \cite{super-misti},  is simple and short, while the second part (dealing with the Serendipity variants) is technically more complex. Section \ref{edge3d} deals with $H(\bcurl)$-conforming VEMs, and is possibly the most innovative. The presentation of \cite{super-misti} is generalized and revised, and then we introduce the Serendipity variants, that here,
in addition to {\it internal} degrees of freedom, allow a reduction of the {\it face} degrees of freedom (that could not be dealt with by static condensation). Of the four cases ({\it face} and {\it edge} in 2 and 3 dimensions), the face 3d case is the most complex, but is a useful
step towards the 3d edge case, that in our opinion is the most innovative and interesting one.

\section{Generalities on polynomial spaces}

To denote the independent variables, both in $\R^2$ and in $\R^3$ we will use either $\xx\equiv(x_1,x_2)$ (resp. $\xx\equiv(x_1,x_2,x_3)$ in 3 dimensions) or $(x,y)$ (resp. $(x,y,z)$) whenever this is more convenient. 

In two dimensions, for a scalar function $v$ we define 
\begin{equation} \label{defbrot}
\brot\,v:=\left(\frac{\partial v}{\partial y}, -\frac{\partial v}{\partial x}\right),
\end{equation}
and for a vector $\vv=(v_1,v_2)$ we define the formal adjoint of $\brot$ as
\begin{equation}\label{defrot}
\rot\vv:=\frac{\partial v_2}{\partial x} -\frac{\partial v_1}{\partial y}.
\end{equation}
Always in two dimensions, we recall that  for every $\cc\in\R^2$ we have
\begin{equation} \label{dac2}
\cc=\bgrad(\cc\cdot\xx)\qquad \mbox{as well as}\qquad \cc=\brot(\cc\cdot\xx^{\perp})
\end{equation}
where for a vector $\uu=(u_1,u_2)$ in $\R^2$ its orthogonal $\uu^\perp$ is defined as
\begin{equation} \label{xxperp}
\uu^\perp:=(u_2, -u_1).
\end{equation}
Similarly, in three dimensions for every $\cc\in\R^3$ we have
\begin{equation} \label{dac3}
\cc=\bgrad(\cc\cdot\xx)\qquad \mbox{as well as}\qquad \cc=\frac{1}{2}\bcurl(\cc\wedge\xx).
\end{equation}

Given a polyhedron $\E$, and a smooth-enough vector $\vv$ in $\E$, for every face $f$ with normal $\nn_f$, 
we can define the {\it tangential part of the vector
$\vv$ on $f$} as
\begin{equation}\label{vtauf}
\vtauf:=\vv-(\vv\cdot\nn_f)\nn_f.
\end{equation}
We observe that $\vtauf$ could be obtained from $\vv\wedge\nn^f$ by a suitable rotation of $\pi/2$, so that
\begin{equation}\label{vtaufevwedgen}
\vtauf=0 \qquad \mbox{iff}\qquad\vv\wedge\nn_f=0.
\end{equation}
With (almost) obvious notation, for every face $f$ we could also consider the two-dimensional operators in the tangential
variables $\bgrad_f$, $\div_f$, $\rot_f$, $\brot_f$,  $\Delta_f$, etc. 

\medskip


%
%
%
%
%
%


\subsection{Decompositions of polynomial vector spaces}

On a generic domain $\O$ (in $d$ dimensions, with $d=1,2,\mbox{ or }3$), and for $k$ integer $\ge 0$ we will denote by {\color{black}$\P_{k,d}(\O)$}  (or simply by $\P_{k,d}$ or even $\P_k$ when no confusion can occur) the space of polynomials
of degree $\le k$ on $\O$. With a common notation, we will also use $\P_{-1}\equiv\{0\}$. Following \cite{super-misti} we will denote by $\pi_{k,2}$ the dimension of the space $\P_{k,2}$ (that is, $(k+1)(k+2)/2$), and by $\pi_{k,3}$ the dimension of $\P_{k,3}$ (that is, $(k+1)(k+2)(k+3)/6$).
 Moreover, for $k\ge 1$, we set
\begin{equation}\label{zeromean}
\P^0_k(\O):=\{ p\in \P_k(\O) \mbox{ such that } \int_{\O} p\, {\rm d}\O=0\},
\end{equation}
\begin{equation}\label{divzero}
(\P_k)^d_{div}:=\{\pp \in (\P_k)^d \mbox{ such that } \div \,\pp=0\} .
\end{equation}
{
\noindent The following decompositions of polynomial vector spaces are  well known, and they will be useful in what follows. 
In two dimensions we have
\begin{equation}\label{decompPk2gxp}
(\P_k)^2=\bgrad(\P_{k+1}) \oplus\xx^{\perp}\P_{k-1},
\end{equation}
\begin{equation}\label{decompPk2rx}
(\P_k)^2=\brot(\P_{k+1}) \oplus\xx\P_{k-1}.
\end{equation}
}
\begin{remark}\label{dadecomps2d} A useful consequence of \eqref{decompPk2gxp} is 
the well known property (valid for all $s\ge 0$):
\begin{equation}\label{rotxpq}
\forall\, p_s\in \P_s \,\;\exists \mbox{ a unique}\, q_s\in\P_s\;\mbox{\rm such that }
{\rm rot}(\xx^{\perp}q_s)=p_s.
\end{equation}
The property follows easily from \eqref{decompPk2gxp} with $k=s+1$ by observing that $\rot((\P_{s+1})^2)=
\P_s$.
In proving \eqref{rotxpq} (and the similar properties that follow) we could have used a more constructive argument, but this is simpler. We also notice that an elegant use of the properties of differential operators applied to homogeneous polynomials can be found in \cite{Chin:Lasserre:Sukumar:homo}. We just point out that, as one can easily check, for a {\bf homogeneous}
 polynomial $p_s$ of degree $s$ we have
\begin{equation}\label{homos}
{\rm rot}(\xx^{\perp}p_s)=(s+2)\,p_s.
\end{equation}
Clearly, from
\eqref{decompPk2rx} we have instead
\begin{equation}\label{divxq}
\forall\, p_s\in \P_s \,\;\exists\mbox{ a unique}\, q_s\in\P_s\;\mbox{\rm such that }
{\rm div}(\xx\,q_s)=p_s,
\end{equation}
with  identical arguments.\qed
\end{remark}

%
In three dimensions the analogues of \eqref{decompPk2gxp}-\eqref{decompPk2rx} are
\begin{equation}\label{decompPk3cx}
(\P_k)^3=\bcurl((\P_{k+1})^3) \oplus\xx\P_{k-1},
\end{equation}
\begin{equation}\label{decompPk3gxw}
(\P_k)^3=\bgrad(\P_{k+1}) \oplus\xx\wedge(\P_{k-1})^3.
\end{equation}

\begin{remark} In computing the dimension of the space $\xx\wedge(\P_{k-1})^3$
that appears in \eqref{decompPk3gxw}, it has to be noted that $\xx\wedge(\xx\,\P_s)\equiv 0$ for all $s$, so that the dimension of $\xx\wedge(\P_{k-1})^3$ is equal to that of $(\P_{k-1})^3$ minus the dimension 
of $ \P_{k-2}$. And, indeed, one can check that $3\pi_{k,3}=[\pi_{k+1,3}-1]+[3\pi_{k-1,3}-\pi_{k-2,3}]$. In its turn, taking into account that the dimension of $\bcurl((\P_{k+1})^3)$ is equal to the dimension of $(\P_{k+1})^3$ minus that of $\bgrad(\P_{k+2})$, we can check the dimensions in \eqref{decompPk3cx}
through $3\pi_{k,3}=[3\pi_{k+1,3}-\{\pi_{k+2,3}-1\}]+\pi_{k-1,3}$. One should just avoid mistakes in the math. \qed
\end{remark}
\begin{remark}\label{dadecomps3d} As in Remark \ref{dadecomps2d},
useful consequences of \eqref{decompPk3cx} and \eqref{decompPk3gxw} are
the equally  well known properties (valid for all $s\ge 0$):
\begin{equation}\label{divxq3}
\forall\, p_s\in \P_s \,\;\;\exists\, \qq_s\in(\P_s)^3\;\mbox{\rm such that }
{\rm div}(\xx\,q_s)=p_s,
\end{equation}
and
\begin{equation}\label{rotxpq3}
\forall\, \pp_s\in (\P_s )^3 ~\mbox{\rm with }\div\pp_s=0\,\;\;\exists\, \qq_s\in(\P_s)^3
~\mbox{\rm with }\div\qq_s=0\;\mbox{\rm such that }
\end{equation}
$${\bf curl}(\xx\wedge \qq_s)=\pp_s.$$
\end{remark}
\qed

\subsection{Polynomial Spaces}

In the Mixed Finite Elements practice one typically encounters vector valued polynomial spaces of a special type. We recall some of them.
For $k\ge 0$, in $2$ or $3$ dimensions, we have
\begin{equation}\label{deftRT2}
RT_k:=(\P_k)^d\oplus \xx\,\P_k^{hom}
\end{equation}
(where, here and in the sequel, the superscript {\it hom} stands for {\it homogeneous}) and, for $k\ge 1$,
\begin{equation}\label{deftBDM2}
BDM_k:=(\P_k)^d.
\end{equation}
 It is simple but useful to note that in any case
\begin{equation}
\{\vv\in RT_k\}\,\mbox{and }\{\div\vv=0\} \mbox{ imply }\{\vv\in (\P_k)^d\}.
\end{equation}

These two types of elements are tailored for the construction of $H(\div)$-conforming mixed finite elements on simplexes. Typically the {\bf normal} components (on edges in 2d, and on faces in 3d) are used as boundary degrees of freedom, so that their continuity, from one element to another, will ensure the $H(\div)$ conformity of the global space. The difference between the two families is that, for a given accuracy $\P_k$ of the normal components
at the boundary,  we have $\div(BDM_k)=\P_{k-1}$  and $\div(RT_k)=\P_k$, so that the {\it RT} elements are
recommended when you need a better accuracy in $H(\div)$, while the {\it BDM} elements are cheaper for the same
accuracy in $L^2$. They are both quite popular and widely used.

The $H(\rot)$ (in 2d) or $H(\bcurl)$ (in 3d) counterparts of these elements are the N\'ed\'elec elements
of first type ({\it N1}) and  of second type ({\it N2}). In two dimensions, the two types are just the $RT$ and 
(respectively) \textit{BDM} elements, up to a rotation of $\pi/2$:
\begin{equation}\label{deftN1-2D}
N1_k:=(\P_k)^2\oplus \xx^{\perp}\,\P_k^{hom}
\end{equation}
\begin{equation}\label{deftN2-2D}
N2_k:=(\P_k)^2.
\end{equation}
The differences (between {\it RT} and \textit{BDM}, on
one side, and {\it N1}-{\it N2} on the other side) are much more relevant in 3d. Indeed, in 3d we have
\begin{equation}\label{deftN1-3D}
{\color{black}N1_k:=(\P_k)^3\oplus \xx\wedge\,(\P_k^{hom})^3}
\end{equation}
\begin{equation}\label{deftN2-3D}
N2_k:=(\P_k)^3.
\end{equation}
Here the  {\bf tangential} components
at the boundary have to be prescribed to ensure the $H(\rot)$-conformity.  
This is done by assigning
 the tangential component on each edge, and then completing the set of degrees of freedom, per face,
with the internal ones.

The above spaces are very well suited for applications to simplicial elements. When applied, in 2d, on squares  (and their affine or isoparametric images)
their definition changes. For instance, on rectangles the spaces $RT$ become
\begin{equation}\label{defqRT2}
RT^q_k:=\Q_{k+1,k}\times\Q_{k,k+1}
\end{equation}
where for integers $r$ and $s$ we used the common notation: 
\begin{equation} \label{Qrs}
\Q_{r,s}=\{\mbox{polinomials of degree $\le r$ in $x_1$ and of degree $\le s$ in $x_2$}\} 
\end{equation}
while
\begin{equation}\label{defqBDM2-q1}
BDM^q_k:=(\P_k)^2\oplus{\rm span}\{\brot(x^{k+1}y)\}\oplus{\rm span}\{\brot(xy^{k+1})\}.
\end{equation}
In  3d, for cubes  we have
\begin{equation}\label{defqRT3}
RT^q_k:=\Q_{k+1,k,k}\times\Q_{k,k+1,k}\times\Q_{k,k,k+1}
\end{equation}
with obvious extension of the notation \eqref{Qrs}.
The definition of \textit{BDM} on cubes is more complicated (see e.g. \cite{A-A}, \cite{Bo-Bre-For}).

{\color{black}We also point out that the non affine images of  these spaces for boxes (squares or cubes) exhibit several forms of severe approximation deficits.}



\section{General strategy towards Serendipity Spaces}\label{seregen}
We position ourselves {\it at the current element level} of a decomposition, and we consider a very general type of local spaces. Then, as usual, the local spaces will be put together to construct the global Virtual Element spaces defined on the whole computational domain.

Let then $\E$ be a polytope, in two or three dimensions, and let $\VV$ be a finite dimensional space  made of smooth enough functions. Let $\NN$ be the dimension of $\VV$; 
we assume that we have $\NN$ linear functionals $\FF_1,...,\FF_ {\NN}$ from $\VV$ to $\R$, linearly independent,
that play the role of {\it original degrees of freedom}.

The {\it name of the game} is to be able to {\it slim down} the space $\VV$ and, accordingly, the degrees of freedom $\FF$, in such a way that we preserve certain properties at a cheaper price.

\subsection{The d.o.f. and the subspace  that we want to keep}\label{keepdof}

We then assume that, among the original degrees of freedom, we have a subset of degrees of freedom that
we want to keep. Typically these will be the boundary ones (or a subset of them,  necessary to
ensure the desired conformity properties for the global space) plus, possibly, some internal ones that we
will need in order to satisfy some additional properties (for instance, an {\it inf-sup} condition of the global space
with respect to some other given space). 

All this will become clear in the examples that follow, but for 
the moment we do not need to specify them. We just assume that our original
degrees of freedom are numbered in such a way that those that we want to keep are the first ones. In other words, given an integer number $\MM\le \NN$, the degrees of freedom that we want to keep are $\FF_1,...,\FF_ {\MM}$.

We also assume that we have a subspace $\SS\subset\VV$
that we want to preserve while reducing $\VV$. A typical example would be to choose $\SS$
as the space of polynomials up to a certain degree, that we want to keep in order to ensure the desired accuracy for the
final discretized problem.

\subsection{The crucial step}

Here comes the crucial step: we assume that we have an intermediate set of degrees of freedom (or, with a suitable
numbering of the original ones, an integer $\SN$ with $\MM\le\SN\le\NN$ ) having the  {\it crucial property} defined here below.

\begin{defn} 
The degrees of freedom $\FF_1,...,\FF_{\SN}$ are {\bf $\SS$-identifying} if
\begin{equation}\label{Sident}
\forall q\in \SS \qquad \{\FF_1(q)=...=\FF_{\SN}(q)=0\}\Rightarrow\{q\equiv 0\}.
\end{equation}
\end{defn}
\qed


\noindent Defining the operator $\DD_{\SN}:\SS\rightarrow\R^{\SN}$ by
\begin{equation}\label{defDD}
\DD_{\SN}(q):=(\FF_1(q),...,\FF_{\SN}(q)),
\end{equation} 
we immediately have that property \eqref{Sident} could also be expressed as:
$$\mbox{ {\it $\DD_{\SN}$ is injective from $\SS$ to $\R^{\SN}$}}.$$

\begin{remark} It is clear that we have, actually, to choose the degrees of freedom
(within $\FF_{\MM+1},...,\FF_{\NN}$ ) that we want to add, and then re-order
the degrees of freedom so that the first $\SN$ are ``the first $\MM$ ones plus the 
chosen additional ones''. However, quite often in what follows, with an abuse
of language we will talk about {\bf choosing $\SN$} to mean that we
choose the additional degrees of freedom and, if necessary, we re-order the whole set.\qed
\end{remark}
Note that, in a certain number of cases, we will be allowed to take $\SN=\MM$, meaning that the degrees
of freedom ``that we want to keep in any case'' are already $\SS$-identifying. In other cases, we will have to add other
degrees of freedom, on top of the first $\MM$ ones, in order to have \eqref{Sident}.
\begin{remark}In all the examples in this paper,  the choice of the degrees of freedom that we want to keep, and the choice of
the space $\SS$ that we want to preserve will be dictated by general needs on the properties of ``the global
space that comes out of the local spaces used within each element'': conformity, accuracy, compatibility with 
other spaces, and so on. On the other hand, the choice of the additional  $\FF_{\MM+1},...,\FF_{\SN}$ degrees of freedom
(if any) will depend very much on several other properties, related to the combination of: the shape of $\E$,
the space $\SS$, and the degrees of freedom $\FF_1,...,\FF_{\MM}$.\qed
\end{remark}

In all cases, the first important step will be to 
check whether the initial 
$\FF_1,...,\FF_{\MM}$ are {\it already} $\SS$-identifying or not. And
if they are not, an even more crucial (and sometimes delicate) step will be to identify the space
\begin{equation}\label{defZG}
\Z:=\{ q\in\SS\mbox{ such that } \FF_1(q)=...=\FF_{\MM}(q)=0\}
\end{equation}
and decide what are the additional degrees of freedom needed to obtain \eqref{Sident}.

In several cases, depending on the dimension (2 or 3), on the types of spaces (nodal, edge, face), on the degree  of the polynomials $\SS$, and on the geometry of the element
we are working on, the identification of $\Z$, and the identification
of a possible set of additional degrees of freedom 
\begin{equation}\label{addit}
\FF_{\MM+1},...,\FF_{\SN}
\end{equation}
will be relatively easy, and computationally cheap. In other cases, it risks to be a nightmare. It is therefore worthwhile, in our opinion, to introduce a {\it general strategy} that, though rather expensive (in terms of operations to be performed at the element level), can be used
in a systematic (and conceptually simple) way in the computer code. As we shall see,
in cases where the same decomposition is going to be used many times (for solving PDE problems with different coefficients, or with different right-hand sides) such a procedure,
implemented once and for all, could be of great help.

\label{system}\subsection{A systematic way to pick $\FF_{\MM+1},...,\FF_{\SN}$} To further simplify the presentation, we also assume that the degrees of freedom $\FF_{\MM+1},...,\FF_{\NN}$ are naturally 
{\it sliced} in several layers: typically, when they correspond to moments against a polynomial space, the slices could be the
homogeneous polynomials of increasing degree: $0, 1, 2,..., k$, or some obvious adaptations of this same slicing to other cases, for instance when $\SS$ is, say, a Raviart-Thomas space or a N\'ed\'elec-first kind space. Note that this is {\it not necessary}
(we could always take $\NN$-$\MM$ slices of size 1), but it could help in simplifying the code, as well as the intuitive grasp of the procedure. Hence we introduce the integer numbers $\sigma_0,\sigma_1,...\sigma_\rho$ to identify the slices:
\begin{equation}\label{slices}
\{\FF_{\MM+1},...,\FF_{\MM+\sigma_0}\},\{\FF_{\MM+\sigma_0+1},...,\FF_{\MM+\sigma_1}\}, ..., \{\FF_{\MM+\sigma_\rho+1},...,\FF_{\NN}\} .
\end{equation}
Let $\NSS$ be the dimension of $\SS$. Taking a basis $s_1,...,s_{\NSS}$ in $\SS$  we can therefore consider the 
$\NSS \times\NN$ matrix $D$ given by
\begin{equation}\label{defD}
D_{ij}:=\FF_j(s_i).
\end{equation}
Since $\SS\subseteq\VV$, and the degrees of freedom $\FF_{1},...,\FF_{\NN}$
are unisolvent in $\VV$ we easily have that the matrix $D$ has maximum rank
(i.e. rank equal to $\NSS$). Note that to say that $\Z\equiv\{{\bf 0}\}$ is
equivalent to say that the sub-matrix $D_{\MM}$, made by the first $\MM$ 
columns of $D$, has already maximum rank. And our target (in choosing $\NSS$) is to have
a sub-matrix  $D_{\NSS}$ (made by the first $\NSS$ columns of $D$) that has
maximum rank. Having to choose $\SN$ we can proceed (in a sort of {\it brutal} way)
by checking successively the sub-matrices
\begin{equation}\label{sottom}
D_{\MM}, D_{\MM+\sigma_0}, D_{\MM+\sigma_1},...
\end{equation}
until we find the first one that has maximum rank (that surely exists, since the whole matrix $D\equiv D_{\NN}$ has maximum rank). This will determine a viable choice
for $\SN$.  

Needless to say, in a number of particular cases we could find a simpler, cheaper, and
sometimes more effective way of choosing $\SN$, as we shall see in the following sections. However, the general strategy described above has to be considered as a solid {\it back-up}
that allows us to proceed even in the worst cases. Hence in the remaining part of the present section, that deals with the {\it general strategy} to
construct our Serendipity-like spaces, we shall assume, from now on, that $\SN$ has
been chosen.

\subsection{Construction of the Serendipity subspaces}

It is now time to explain the way to construct (after $\SN$ has been chosen)  our Serendipity-like local spaces, and in particular to see
how property \eqref{Sident} is used for it. 

Assume therefore that we have chosen the degrees of freedom $\FF_1,...,\FF_{\SN}$, and let us construct a suitable
(serendipity!) subspace $\VV_{\SN}$, with $\SS\subseteq\VV_{\SN}\subseteq\VV$, for which  $\FF_1,...,\FF_{\SN}$
are a unisolvent set of degrees of freedom.

The procedure will now be simple, since we prepared everything already.

We define a projection operator $\RR^{\SN}:\VV\rightarrow\SS$ as follows. For $v\in\VV$, we define $\RR^{\SN}v$ 
as the {\it unique} element of $\SS$ such that
\begin{equation}\label{defRS}
\Big[\DD_{\SN}(\RR^{\SN}v),\DD_{\SN}q\Big]_{\R^{\SN}}=\Big[\DD_{\SN} v,\DD_{\SN}q\Big]_{\R^{\SN}}\quad\forall q\in\SS
\end{equation}
where $[\,\cdot~,\,\cdot]_{\R^{\SN}}$ is the Euclidean scalar product in $\R^{\SN}$. Note that the fact that
$\DD_{\SN}$ is injective (that is, property \eqref{Sident}) plays a crucial role in ensuring that problem \eqref{defRS}
has a unique solution in $\SS$. Needless to say, the  Euclidean scalar product $[\,\cdot~,\,\cdot]_{\R^{\SN}}$
could be substituted by any other symmetric and positive definite bilinear form on $\R^{\SN}$.

Once $\RR^{\SN}$ has been defined, we can introduce the serendipity space $\VV_{\SN}$ as
\begin{equation}\label{defVS}
\VV_{\SN}:=\{v\in\VV\mbox{ such that }\FF_i(v)=\FF_i(\RR^{\SN}v)\;(i=\SN+1,...,\NN)\}.
\end{equation}
The following proposition is an immediate consequence of this construction.
\begin{prop}
 With the above construction, the degrees of freedom  $\FF_1,...,\FF_{\SN}$ are unisolvent for the space 
$\VV_{\SN}$. Moreover, if $v\in\VV_S$, using $\FF_1(v),...,\FF_{\SN}(v)$ one can compute the remaining $\FF_{\SN+1}(v),...,\FF_{\NN}(v)$. Finally, we observe that $\SS \subseteq \VV_{\SN}$.\qed
\end{prop}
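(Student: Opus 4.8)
The plan is to verify the three assertions of the proposition directly from the definitions \eqref{defDD}, \eqref{defRS}, and \eqref{defVS}, exploiting only the $\SS$-identifying property \eqref{Sident} and the unisolvence of $\FF_1,\dots,\FF_\NN$ on $\VV$.

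\textbf{Step 1: $\RR^{\SN}$ is well defined, and $\RR^{\SN}q=q$ for $q\in\SS$.} First I would note that \eqref{defRS} is a square linear system on the finite-dimensional space $\SS$: once we fix a basis $s_1,\dots,s_{\NSS}$ of $\SS$, the bilinear form $(w,q)\mapsto[\DD_{\SN}w,\DD_{\SN}q]_{\R^{\SN}}$ has Gram matrix $G$ with $G=A^{\mathsf T}A$ where $A$ is the $\SN\times\NSS$ matrix whose $(j,i)$ entry is $\FF_j(s_i)$. Property \eqref{Sident} says exactly that $A$ has trivial kernel, i.e. $G$ is symmetric positive definite, so \eqref{defRS} has a unique solution $\RR^{\SN}v\in\SS$ for every right-hand side, in particular for every $v\in\VV$. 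If moreover $v=q_0\in\SS$, then $\RR^{\SN}q_0$ and $q_0$ both satisfy \eqref{defRS} with the same right-hand side; by uniqueness $\RR^{\SN}q_0=q_0$. Hence $\RR^{\SN}$ is a projection onto $\SS$.

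\textbf{Step 2: Unisolvence of $\FF_1,\dots,\FF_{\SN}$ on $\VV_{\SN}$.} Here I would count dimensions and check injectivity. The map $v\mapsto(\FF_1(v),\dots,\FF_\NN(v))$ is a linear isomorphism $\VV\to\R^\NN$ by unisolvence on $\VV$. The defining conditions of $\VV_{\SN}$ in \eqref{defVS} are the $\NN-\SN$ linear constraints $\FF_i(v)-\FF_i(\RR^{\SN}v)=0$, $i=\SN+1,\dots,\NN$; since the functional $v\mapsto\FF_i(v)-\FF_i(\RR^{\SN}v)$ kills $v\iff$ it only involves the remaining coordinates in a graph-like way, one sees $\VV_{\SN}$ is the graph over $\R^{\SN}$ of the affine-linear map that sends $(\FF_1(v),\dots,\FF_{\SN}(v))$ to $(\FF_{\SN+1}(\RR^{\SN}v),\dots,\FF_{\NN}(\RR^{\SN}v))$ — note $\RR^{\SN}v$ depends only on $\DD_{\SN}v=(\FF_1(v),\dots,\FF_{\SN}(v))$ by \eqref{defRS}. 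Concretely: if $v\in\VV_{\SN}$ and $\FF_1(v)=\dots=\FF_{\SN}(v)=0$, then $\DD_{\SN}v=0$, so by Step 1 (uniqueness in \eqref{defRS} with zero right-hand side) $\RR^{\SN}v=0$, and then the constraints give $\FF_i(v)=\FF_i(\RR^{\SN}v)=\FF_i(0)=0$ for $i=\SN+1,\dots,\NN$ as well; unisolvence on $\VV$ forces $v=0$, giving injectivity. For surjectivity, given any $(a_1,\dots,a_\SN)\in\R^\SN$, pick the unique $v\in\VV$ with $\FF_i(v)=a_i$ for $i\le\SN$ and $\FF_i(v)=\FF_i(\RR^{\SN}w)$ for $i>\SN$, where $w$ is any element of $\VV$ with $\DD_{\SN}w=(a_1,\dots,a_\SN)$; this $v$ lies in $\VV_{\SN}$ and realizes the prescribed values. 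Hence $\dim\VV_{\SN}=\SN$ and $\FF_1,\dots,\FF_{\SN}$ are unisolvent on it. The same computation shows the remaining d.o.f.\ are recovered by $\FF_i(v)=\FF_i(\RR^{\SN}v)$, which proves the second assertion.

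\textbf{Step 3: $\SS\subseteq\VV_{\SN}$.} For $q\in\SS$, Step 1 gives $\RR^{\SN}q=q$, hence trivially $\FF_i(q)=\FF_i(\RR^{\SN}q)$ for all $i=\SN+1,\dots,\NN$, so $q\in\VV_{\SN}$ by \eqref{defVS}. This is the easiest point.

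I do not expect any serious obstacle: the whole content is bookkeeping around the fact that \eqref{Sident} makes the Gram matrix in \eqref{defRS} invertible. The one place to be careful is the dimension count in Step 2 — specifically making explicit that $\RR^{\SN}v$ is a function of the first $\SN$ d.o.f.\ only, so that the $\NN-\SN$ constraints defining $\VV_{\SN}$ are genuinely independent of each other and cut the dimension down from $\NN$ to exactly $\SN$ rather than less. Everything else is immediate.
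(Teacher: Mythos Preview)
Your proof is correct and proceeds exactly along the lines the paper intends: the paper states the proposition as ``an immediate consequence of this construction'' and offers no further argument, and you have simply spelled out the details (well-definedness of $\RR^{\SN}$ from the $\SS$-identifying property, injectivity plus dimension count for unisolvence, and the observation $\RR^{\SN}q=q$ for $q\in\SS$). There is nothing to compare or correct.
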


To summarize the results of the present section, we recall that, in all cases, in order to pass from the
original space (with original degrees of freedom) to the Serendipity space (with a smaller number of degrees
of freedom), one has to:
\begin{itemize}
\item Identify the $\MM$ degrees of freedom that we want to keep, and the polynomial space $\SS$ that we want 
to maintain inside the local space.
 \item  Consider the space $\Z$ defined in \eqref{defZG}.
\item If $\Z\equiv\{\bf 0\}$, take $\SN=\MM$ and proceed directly to \eqref{defRS}, and then to \eqref{defVS}.
\item If instead $\Z$ contains some nonzero elements, identify $\NN_{\Z}$ 
additional degrees of freedom that, added to the previous $\MM$, form a set of $\SS$-identifying degrees of freedom, in the sense of Definition \eqref{Sident}. Then take $S=\MM+\NN_{\Z}$.
\end{itemize}
Clearly, in the latter case, $\NN_{\Z}$ will have to be equal, or bigger than the dimension of $\Z$.  

In the following sections we will first recall the mixed virtual element spaces already introduced in \cite{super-misti}
(although with slightly different degrees of freedom), and then discuss the application of the general Serendipity strategy to each particular case.



\section{Face Virtual Elements in 2d}\label{face2d}

\subsection{Definition of the VEM spaces}
We start by considering the  two-dimensional face elements  $\bV^f_{k,k_d,k_r}(\E)$.
For $k$, $k_d$, $k_r$ integers, with $k\ge 0$, $k_d\ge 0$, $k_r$  $\ge -1$ we set:
\begin{equation}\label{VEMf2d}
\bV^f_{k,k_d,k_r}(\E)\!:=\!\{\vv|\, \vv\cdot\nn_e\!\in\!\P_k(e) \,\forall \mbox{ edge } e,\, \div\vv\!\in\!\P_{k_d}(\E),\,\rot\vv\!\in\!\P_{k_r}(\E)\},
\end{equation}
with the following degrees of freedom: 
\begin{align}
&D_1:\quad\int_e\vv\cdot\nn_e\,q_k\de\quad\mbox{ for all }q_{k}\in\P_k(e), \mbox{ for all edge }e,\label{f2d_d0f4}
\\
&D_2:\quad\mbox{ for $k_d\ge 1$: }\int_{\E}\vv\cdot\bgrad q_{k_d}\dE \quad\mbox{ for all $q_{k_d}\in\P_{k_d}(\E)$},
\label{f2d_d0f5}
\\
&D_3:\quad\mbox{ for $k_r\ge 0$: }\int_{\E}\vv\cdot\xx^\perp\,q_{k_r}\dE \quad\mbox{ for all $q_{k_r}\in\P_{k_r}(\E)$}.\label{f2d_d0f6}
\end{align}

\begin{prop}\label{unidoff2d}
The degrees of freedom \eqref{f2d_d0f4}-\eqref{f2d_d0f6} 
 are unisolvent. Moreover, they allow to compute the 
$L^2(\E)$-orthogonal projection operator from  $\bV^f_{k,k_d,k_r}(\E)$ to $(\P_s)^2$ for every integer $s\le k_r+1$.
\end{prop}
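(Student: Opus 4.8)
The plan is to prove the two assertions of Proposition \ref{unidoff2d} in sequence, first unisolvence, then computability of the $L^2$-projection onto $(\P_s)^2$ for $s \le k_r+1$.

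\emph{Step 1: counting degrees of freedom.} First I would verify that the number of functionals in \eqref{f2d_d0f4}--\eqref{f2d_d0f6} equals $\dim \bV^f_{k,k_d,k_r}(\E)$. The count of $D_1$ is $(k+1)\cdot(\#\text{edges})$; the count of $D_2$ is $\pi_{k_d,2}-1$ (using $\P_{k_d}^0$, i.e. the gradient kills constants); the count of $D_3$ is $\pi_{k_r,2}$. On the other side, $\dim \bV^f_{k,k_d,k_r}(\E)$ can be computed by the standard VEM exact-sequence argument: a function in the space is determined, modulo $H(\div)\cap H(\rot)$ data, by its normal traces, its divergence in $\P_{k_d}$, and its rot in $\P_{k_r}$. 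I would record this dimension count as a lemma-style computation (it is routine bookkeeping, so I will not grind through it), noting that it already appears, up to the slightly different choice of degrees of freedom, in \cite{super-misti}. Once the numbers match, unisolvence reduces to injectivity: a $\vv\in\bV^f_{k,k_d,k_r}(\E)$ annihilated by all of \eqref{f2d_d0f4}--\eqref{f2d_d0f6} must vanish.

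\emph{Step 2: injectivity.} Suppose $\vv$ is killed by all the functionals. From $D_1 = 0$ and $\vv\cdot\nn_e\in\P_k(e)$ we get $\vv\cdot\nn_e = 0$ on every edge, so $\vv\cdot\nn = 0$ on $\partial\E$. Integrating $\div\vv$ against $q_{k_d}\in\P_{k_d}(\E)$ and using Green's formula,
\begin{equation*}
\int_\E (\div\vv)\, q_{k_d}\dE = -\int_\E \vv\cdot\bgrad q_{k_d}\dE + \int_{\partial\E}(\vv\cdot\nn)\,q_{k_d}\ds = 0
\end{equation*}
by $D_2=0$ and the vanishing boundary term; since $\div\vv\in\P_{k_d}(\E)$ this forces $\div\vv = 0$. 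Similarly, integrating $\rot\vv$ against $q_{k_r}\in\P_{k_r}(\E)$, an integration by parts for the $\rot$ operator turns the bulk term into $\int_\E \vv\cdot\brot q_{k_r}$ plus a boundary term in the tangential trace of $\vv$; however it is cleaner here to use the decomposition \eqref{decompPk2gxp} to write any $q_{k_r}$-test appropriately, or simply to use $D_3$ directly: writing the test polynomial via \eqref{rotxpq} as $\rot(\xx^\perp q_{k_r})$ and integrating by parts, the bulk contribution becomes $\int_\E \vv\cdot\xx^\perp q_{k_r}$ plus a boundary term that vanishes because $\vv\cdot\nn = 0$ on $\partial\E$; hence $D_3 = 0$ gives $\rot\vv = 0$. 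Now $\vv$ satisfies $\div\vv = 0$, $\rot\vv = 0$ in $\E$ and $\vv\cdot\nn = 0$ on $\partial\E$; by the standard uniqueness for this first-order system on a (simply connected) polygon — equivalently, $\vv = \brot\psi$ with $\Delta\psi = 0$ and $\partial_t\psi$ constant on $\partial\E$, forcing $\psi$ constant — we conclude $\vv \equiv 0$. This, together with the dimension count, gives unisolvence.

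\emph{Step 3: computability of the $L^2$-projection.} Fix $s\le k_r+1$. To compute $\Pi^0_s\vv \in (\P_s)^2$ from the degrees of freedom, I must evaluate $\int_\E \vv\cdot\pp_s\dE$ for every $\pp_s\in(\P_s)^2$. Use the decomposition \eqref{decompPk2gxp}: write $\pp_s = \bgrad r_{s+1} + \xx^\perp t_{s-1}$ with $r_{s+1}\in\P_{s+1}$, $t_{s-1}\in\P_{s-1}$. The term $\int_\E \vv\cdot\xx^\perp t_{s-1}\dE$ is directly a $D_3$ quantity, available since $s-1 \le k_r$. The term $\int_\E \vv\cdot\bgrad r_{s+1}\dE$: if $r_{s+1}$ has zero mean it is a $D_2$ quantity provided $s+1\le k_d$; in general, integrate by parts to get $-\int_\E (\div\vv)\, r_{s+1}\dE + \int_{\partial\E}(\vv\cdot\nn)\,r_{s+1}\ds$. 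The boundary integral is computable from $D_1$ because $\vv\cdot\nn_e\in\P_k(e)$ and $r_{s+1}|_e\in\P_{s+1}(e)$ — one needs enough moments, which is where the degree bookkeeping must be checked, but the trace is a polynomial of controlled degree so this is a finite, explicit pairing. For the bulk term $\int_\E(\div\vv)\,r_{s+1}\dE$ I first need to know $\div\vv\in\P_{k_d}(\E)$ explicitly; but $\div\vv$ is itself computable from the degrees of freedom by the same Green's-formula identity used in Step 2 (its moments against $\P_{k_d}$ are determined by $D_1$ and $D_2$), and $r_{s+1}$ pairs against a polynomial, so this is fine regardless of whether $s+1\le k_d$. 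Hence every $\int_\E\vv\cdot\pp_s\dE$ is computable, and solving the small linear system for the coefficients of $\Pi^0_s\vv$ against a basis of $(\P_s)^2$ completes the argument.

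\emph{Main obstacle.} The delicate point is Step 3, specifically ensuring that the boundary term $\int_{\partial\E}(\vv\cdot\nn)\,r_{s+1}\ds$ is genuinely recoverable from $D_1$: this requires that the edgewise degree $k$ of the normal trace be large enough relative to $s+1$, or more precisely that the relevant products lie in the span accessible to $D_1$. Since $r_{s+1}|_e$ need not have degree $\le k$, one must argue that $\vv\cdot\nn_e\in\P_k(e)$ means the pairing $\int_e(\vv\cdot\nn_e)\,r_{s+1}\ds$ only ``sees'' the projection of $r_{s+1}|_e$ onto $\P_k(e)$, which is a known polynomial — so $D_1$ suffices after all. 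Making this precise (and checking that no hidden degree restriction beyond $s\le k_r+1$ is needed) is the part that warrants care; everything else is the standard VEM toolkit plus the polynomial decompositions recalled in Section 2.
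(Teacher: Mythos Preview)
Your Step~3 is essentially the paper's argument: decompose $\pp_s=\bgrad r_{s+1}+\xx^\perp t_{s-1}$ via \eqref{decompPk2gxp}, handle the gradient part by integration by parts using $D_1$ and $D_2$, and read the $\xx^\perp$ part off from $D_3$. Your resolution of the ``main obstacle'' is also correct: since $\vv\cdot\nn_e\in\P_k(e)$, the edge integral $\int_e(\vv\cdot\nn_e)\,r_{s+1}\de$ only sees the $\P_k(e)$-projection of $r_{s+1}|_e$, which is computable from $D_1$ regardless of $s$.

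Step~2, however, has a genuine gap in the passage to $\rot\vv=0$. The Green formula for $\rot$ reads
\[
\int_\E(\rot\vv)\,\psi\dE=\int_\E\vv\cdot\brot\psi\dE-\int_{\partial\E}(\vv\cdot\tt)\,\psi\ds,
\]
so the boundary term involves the \emph{tangential} trace $\vv\cdot\tt$, which you do not control; your claim that it ``vanishes because $\vv\cdot\nn=0$'' is false. Rewriting the scalar test as $\rot(\xx^\perp q_{k_r})$ via \eqref{rotxpq} does not help: you are still pairing a scalar against $\rot\vv$, and the same tangential boundary term appears. Moreover the resulting bulk term is $\int_\E\vv\cdot\brot\bigl(\rot(\xx^\perp q_{k_r})\bigr)\dE$, which is not $\int_\E\vv\cdot\xx^\perp q_{k_r}\dE$.

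The paper sidesteps this entirely: it never proves $\rot\vv=0$ as an intermediate step. Instead it uses \eqref{rotxpq} to write every $\vv\in\bV^f_{k,k_d,k_r}(\E)$ as $\vv=\bgrad\phi+\xx^\perp p_{k_r}$ with $\phi\in H^1(\E)$ and $p_{k_r}\in\P_{k_r}$, and then, having already shown $\div\vv=0$ and $\vv\cdot\nn=0$ (hence $\int_\E\vv\cdot\bgrad\varphi\dE=0$ for \emph{all} $\varphi\in H^1(\E)$), computes
\[
\int_\E|\vv|^2\dE=\int_\E\vv\cdot\bgrad\phi\dE+\int_\E\vv\cdot\xx^\perp p_{k_r}\dE=0+0.
\]
If you prefer to stay closer to your div--rot route, the correct fix is to first use $\div\vv=0$, $\vv\cdot\nn=0$ to write $\vv=\brot\psi$ with $\psi\in H^1_0(\E)$; then $D_3=0$ integrates by parts (boundary term now vanishing because $\psi|_{\partial\E}=0$) to give $\int_\E\psi\,p\dE=0$ for all $p\in\P_{k_r}$, and since $-\Delta\psi=\rot\vv\in\P_{k_r}$ you get $\int_\E|\nabla\psi|^2\dE=0$.
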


\begin{proof}
First we observe that, using \eqref{rotxpq} with
$s=k_r$,  for every $\vv$ in $\bV^f_{k,k_d,k_r}(\E)$ we can always find a $p_{k_r}$ such that $\rot(\xx^\perp\,p_{k_r})=\rot\vv$. Then  $\rot (\vv-\xx^\perp\,p_{k_r})=0$, and therefore $\vv-\xx^\perp\,p_{k_r}$ is a gradient. We deduce that:
\begin{equation}\label{decoe2d}
\left\{\aligned&\mbox{\it every $\vv\in\bV^f_{k,k_d,k_r}(\E)$ can be written in a unique way as}\\
&\vv=\bgrad\phi+\xx^{\perp}p_{k_r}\quad\mbox{\it  for some function $\phi$ and some $p_{k_r}$ in $\P_{k_r}$}.
\endaligned
\right.
\end{equation}
This immediately gives the unisolvence of the d.o.f.  Indeed, the number of d.o.f. being equal to the dimension of $\bV^f_{k,k_d,k_r}(\E)$, we have to show that if   a $\vv$ in $\bV^f_{k,k_d,k_r}(\E)$  verifies $D_1$=$D_2$=$D_3$=$0$, then $\vv\equiv 0$. From $D_1$=$0$ we immediately deduce $\vv\cdot\nn=0\mbox{ on }\partial\E$ which, together with $D_2$=$0$ and an integration by parts gives $\div\,\vv=0\mbox{ in }\E.$ Consequently:
\begin{equation}\label{perpaigrad2-p}
\int_{\E}\vv\cdot\bgrad\varphi\dE = -\int_{\E}\div\vv\,\varphi\dE+\int_{\partial\E}\vv\cdot\nn\,\varphi\de=0\quad\forall\varphi\in H^1(\E).
\end{equation}
Finally, using \eqref{decoe2d}, then \eqref{perpaigrad2-p} and $D_3=0$:
\begin{equation}\label{unis2df}
\int_{\E}|\vv|^2\dE=\int_{\E}\vv\cdot(\bgrad\phi+\xx^{\perp}p_{k_r})\dE\\= 0 + 0.
\end{equation}
Arguing as  for \eqref{perpaigrad2-p}  we see that the d.o.f. \eqref{f2d_d0f4} and \eqref{f2d_d0f5} allow to compute  the integral
$\int_{\E}\vv\cdot\pp\dE$ for every $\pp=\bgrad \vphi$, and $\vphi$ polynomial of any degree. On the other hand,  the d.o.f. \eqref{f2d_d0f6} 
allow to compute also $\int_{\E}\vv\cdot\xx^{\perp}p\dE$ for every $p\in\P_{k_r}$.  Hence, looking now at  \eqref{decompPk2gxp}, we deduce that for every $s\le k_r+1$ and for every $\vv\in V^{f}_{k,k_d,k_k}(\E)$ we can compute the $L^2$-projection $\Pi^0_{s}\vv$ on $(\P_s(\E))^2$.

\end{proof}

 \begin{remark} 
 \label{deponly}
 As it comes out clearly from the last part of the above proof,  once the degrees of freedom \eqref{f2d_d0f4} and \eqref{f2d_d0f5} match the values
 of $k$ and $k_d$ (respectively) in \eqref{VEMf2d}, then {\bf the computability of  $\Pi_s^0$} (for $s$ arbitrarily big) {\bf depends  only on the value of $k_r$}. \hfill  \qed
 \end{remark}

 \begin{remark}\label{freedom} It is easy to see that when used in combination with the degrees of freedom \eqref{f2d_d0f4}, the degrees
 of freedom \eqref{f2d_d0f5} can equivalently be replaced by
 \begin{equation}\label{f2d_d0f5-alt}
\bullet\mbox{ for $k_d\ge 1$: }\int_{\E}\div\vv\,q_{k_d}\dE \quad\mbox{ for all $q_{k_d}\in\P_{k_d}^{\,0}(\E)$}. \qed
\end{equation}
\end{remark}
Along the same lines, it should also be pointed out, at the general level, that for the same space we could obviously construct a huge number
of different unisolvent sets of d.o.f. which, as such, are all equivalent. In some cases the procedure that one 
has to follow to pass from one set to an equivalent one is reasonably simple and can be performed  with
a modest amount of computations. In other cases, however, this passage  would require much more difficult computations: typically, the solution of a partial differential equation (or even a system of partial differential equations) within the element, something that goes far beyond the work that one is ready to perform.
Here for instance, instead of the degrees of freedom \eqref{f2d_d0f6} we could clearly use 
\begin{equation}\label{f2d_d0f6-alt}
\bullet\mbox{ for $k_r\ge 0$: }\int_{\E}\rot\vv\,q_{k_r}\dE \quad\mbox{ for all $q_{k_r}\in\P_{k_r}(\E)$}.
\end{equation}
It is however easy to see that in order to pass from one set to the other we should solve a $\div-\rot$ system in $\E$.
Depending on what you need to compute inside the element $\E$ you must therefore choose and use one set of degrees
of freedom, and forget the other ones that are equivalent but not ``computationally equivalent''.

 \begin{remark} \label{menouno} In principle, one could consider, say, face Virtual Elements  with $k_d=-1$, implying that we restrict our attention to divergence-free vectors. Unfortunately, in this case, the divergence theorem requires $\int_{\partial\E}\vv\cdot\nn=0$ in 
 \eqref{f2d_d0f4}, and we could not use a local basis in the computational domain.
\qed
 \end{remark}
 
\subsection{Comparisons with Finite Elements}

 The comparison between VEMs and FEMs can only be done on a limited number of classical geometries
(here for simplicity we consider only simplexes and boxes). However it should be clear from the very beginning that VEMs allow much more general geometries. For these more general geometries the comparison should actually be done between VEMs and  other methods designed for polytopes, as for instance \cite{MFD23}, \cite{Bishop}, \cite{Cangiani:Georgoulis:Houston2014}, \cite{Cockburn-IMU},  \cite{DiPietro-Ern-Lemaire}, \cite{Floater:Acta}, \cite{Gillette-2}, \cite{Fries:Belytschko:2010}, \cite{MFD22}, \cite{GFEM146}, \cite{Gillette-14}, \cite{Wachspress2016}.


 \begin{figure}[!]
  \begin{center}
    \includegraphics[width=8.0cm]{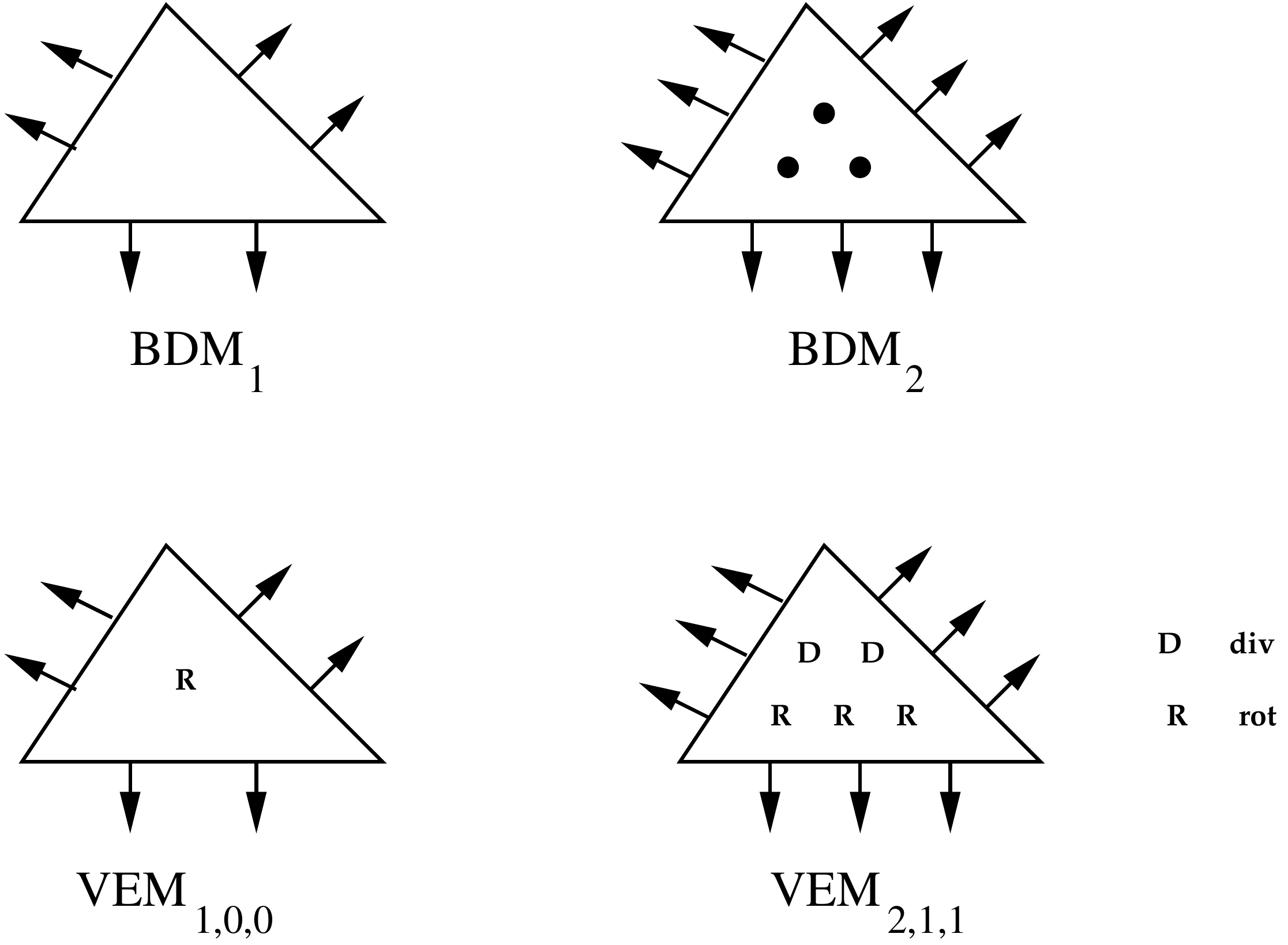}
  \end{center}
  \vskip-0.3truecm
  {\caption{Triangles: $BDM_k$ and $VEM_{k,k-1,k-1}$}\label{clas-tri-f}}
  \vskip0.3truecm
\end{figure}

The natural comparison, within Finite Elements, of our $V^f_{k,k-1,k-1}$ elements are clearly the \textit{BDM} spaces as described in \eqref{deftBDM2} for triangles (see Figure \ref{clas-tri-f}).


The same comparison for quadrilaterals is shown in  Figure \ref{clas-triBDM}. In both cases we see that the elements in $V^f_{k,k-1,k-1}$  have a higher number of degrees of freedom than the corresponding \textit{BDM} Finite Elements.

\begin{figure}[!]
  \begin{center}
    \includegraphics[width=8.0cm]{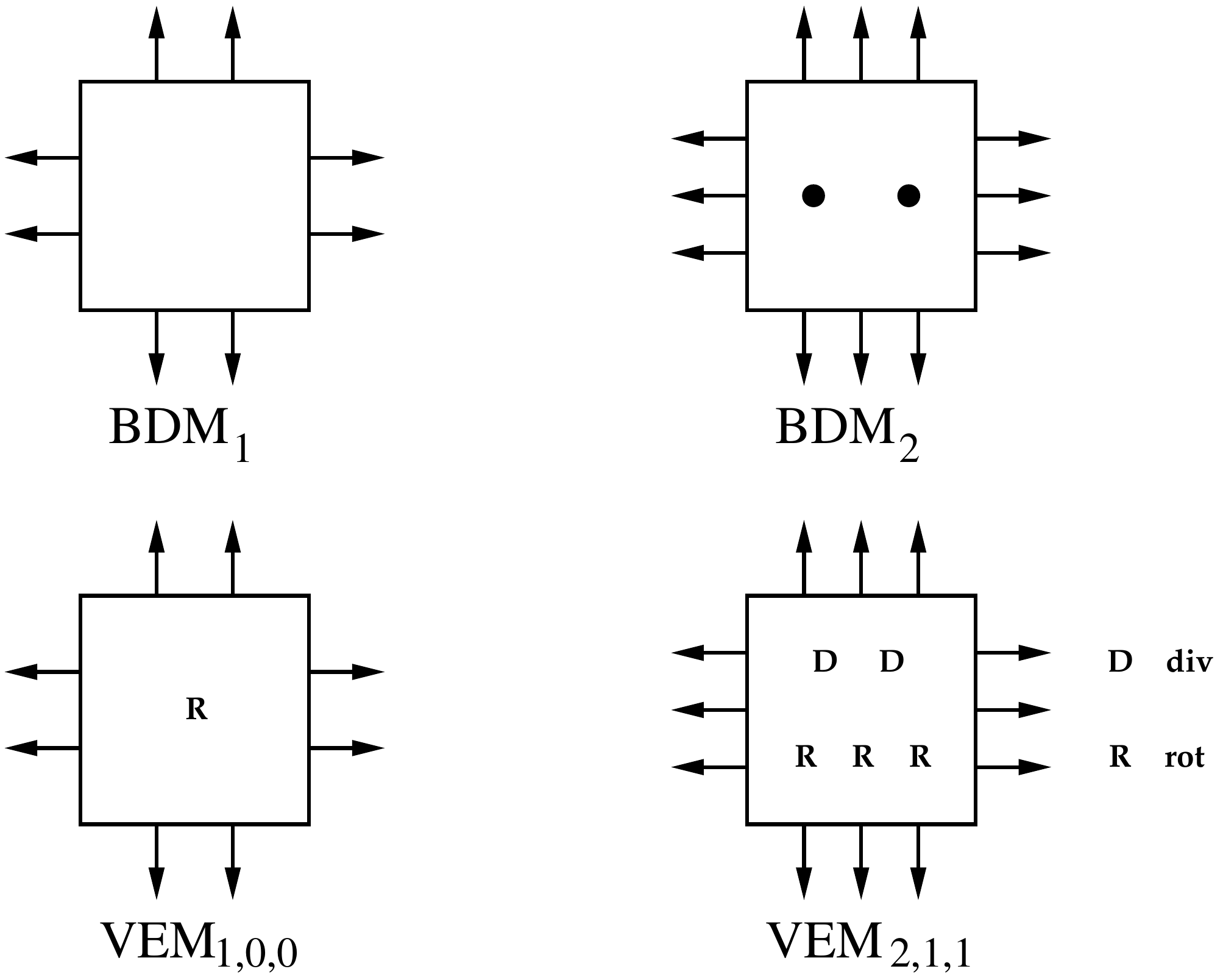}
  \end{center}
  \vskip0.7truecm
  {\caption{Quadrilaterals: \textit{BDM}$_k$ and $VEM_{k,k-1,k-1}$}\label{clas-triBDM}}
  \vskip0.7truecm
\end{figure}
On the other hand, the natural counterpart for $V^f_{k,k,k-1}$ are the classical Raviart-Thomas
elements. For comparison, see   Figure \ref{clas-tr-0} for triangles,
\begin{figure}[ht!]
  \begin{center}
    \includegraphics[width=9.6cm]{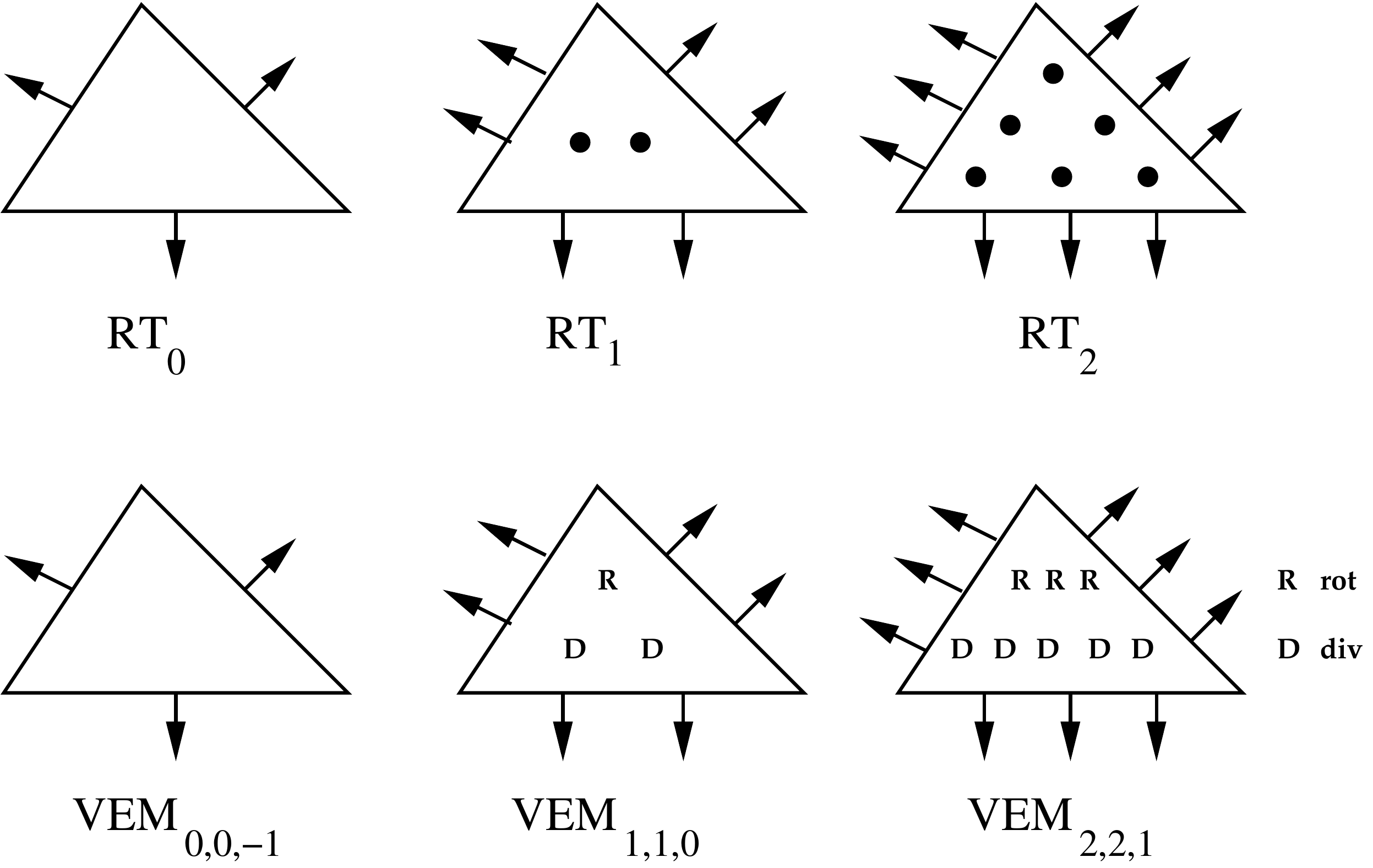}
  \end{center}
  \vskip-0.3truecm
  {\caption{Triangles: $RT_k$ and $VEM_{k,k,k-1}$}\label{clas-tr-0}}
  \vskip0.3truecm
\end{figure} 
\noindent where again VEMs have more degrees of freedom. Instead, on quadrilaterals VEMs have a smaller number
of degrees of freedom than $RT$ (see Figure \ref{clas-qua1}).
 \begin{figure}[ht!]
  \begin{center}
    \includegraphics[width=9.6cm]{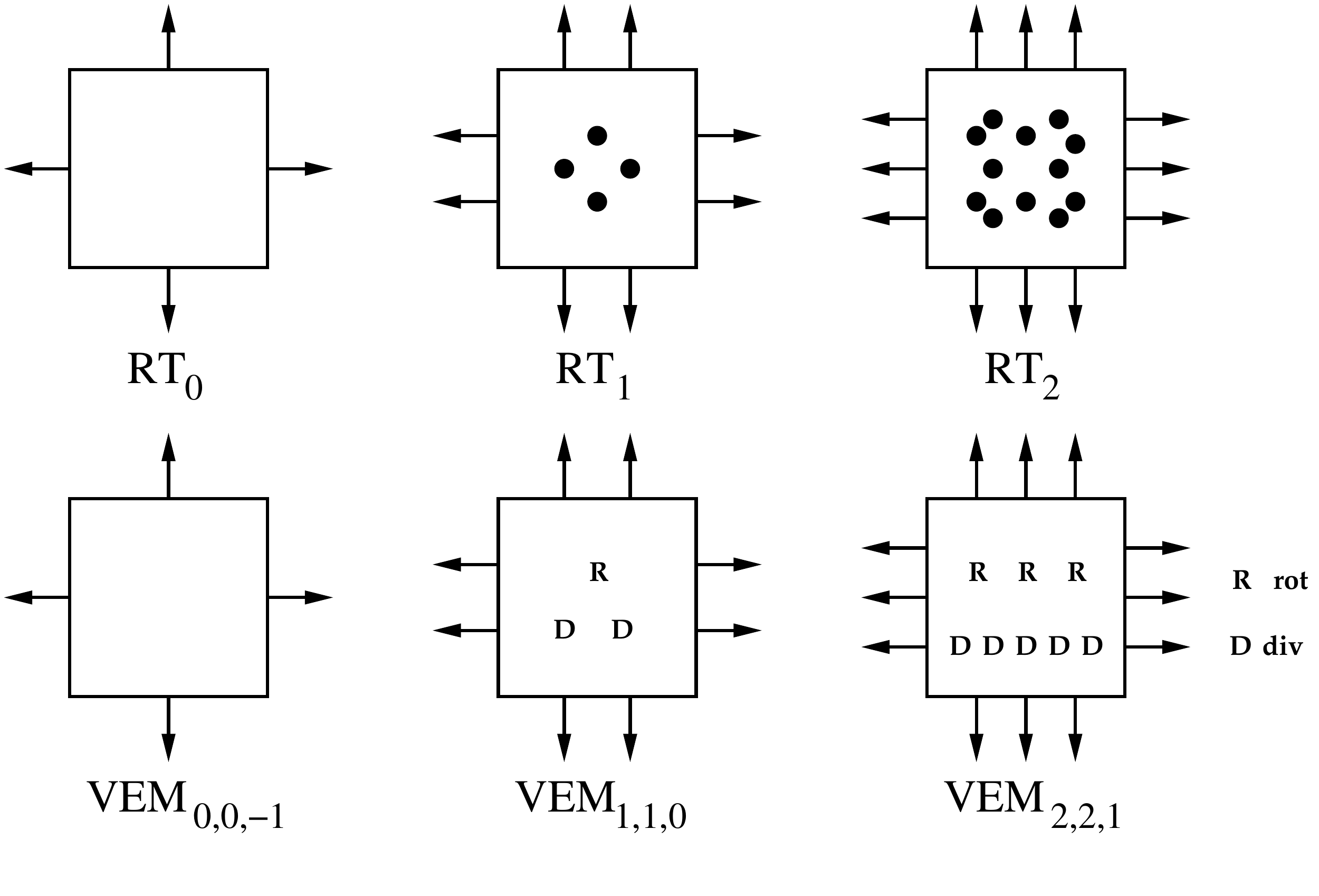}
  \end{center}
  \vskip-0.7truecm
  {\caption{Quadrilaterals: $RT_k$ and $VEM_{k,k,k-1}$}\label{clas-qua1}}
  \vskip0.7truecm
\end{figure}

Finally, the natural counterpart of the $VEM^f_{k,k,k}$  are the {\it BDFM} finite element spaces. We omit a detailed comparison, and we only point out that here too VEMs have more degrees of freedom.
%
%



\subsection{B-Compatible Interpolators for Face VEM in 2d}

Given a smooth enough vector valued function $\uu$, we can now use the degrees of freedom \eqref{f2d_d0f4}--\eqref{f2d_d0f6} to define an interpolation operator that, for brevity, we
denote by $\Pi^I$ (neglecting its obvious dependence on $k$, $k_d$, $k_r$, and $\E$) given by
\begin{align}
\label{f2d_d0fI}
&\bullet\;\; \Pi^I\uu\in\bV^f_{k,k_d,k_r}(\E) \qquad\mbox{and}\\
\label{f2d_d0f4I}
&\bullet\int_e(\uu-\Pi^I\uu)\cdot\nn_e\,q_k\de=0\quad\mbox{ for all $q_{k}\in\P_k(e)$, for all edge $e$,}\\
\label{f2d_d0f5I}
&\bullet\mbox{ for $k_d\ge 1$: }\int_{\E}(\uu-\Pi^I\uu)\cdot\bgrad q_{k_d}\dE =0\quad\mbox{ for all $q_{k_d}\in\P_{k_d}(\E)$},\\
\label{f2d_d0f6I}
&\bullet\mbox{ for $k_r\ge 0$: }\int_{\E}(\uu-\Pi^I\uu)\cdot\xx^\perp\,q_{k_r}\dE=0 \quad\mbox{ for all $q_{k_r}\in\P_{k_r}(\E)$}.
\end{align}
%
It is easy to check that $\Pi^I$ is a B-compatible operator (in the sense, for instance, of Section 5.4.3
of \cite{Bo-Bre-For}). In our particular case, this means that
\begin{equation}\label{Bcompatf2}
\int_{\E}\div(\uu-\Pi^I\uu)q_{k_d}\dE=0,\qquad\forall q_{k_d}\in\P_{k_d}(\E),
\end{equation}
which is an easy consequence of \eqref{f2d_d0f4I} and \eqref{f2d_d0f5I} upon an integration by parts.

%

\begin{remark}\label{XBcompat}
It is important to point out that in the definition \eqref{f2d_d0f4I}-\eqref{f2d_d0f6I} of the
operator $\Pi^I$, only the degrees of freedom \eqref{f2d_d0f4I}-\eqref{f2d_d0f5I} are necessary in order to have \eqref{Bcompatf2}. Hence, among the degrees of freedom that we will want to keep (in our Serendipity approach), we will  have to include \eqref{f2d_d0f4I}-\eqref{f2d_d0f5I}, in order to preserve conformity and B-Compatibility, while
the degrees of freedom \eqref{f2d_d0f6I} will be, so to speak, ``expendible''. \qed
\end{remark}

\section{Serendipity face elements in 2 dimensions}\label{sereface}

We want now to eliminate some of the internal degrees of freedom of the VEM spaces defined in the previous section, 
following the general strategy of Sect. \ref{seregen}. As we have seen there, we have to decide first what are the $\MM$ degrees of freedom that we want to keep, and what is the polynomial space that we want to preserve. 

The first choice (concerning the degrees of freedom) is rather simple, as we already pointed out in Remark \ref{XBcompat}: in order to have an $H(\div)$-conforming global space we need to keep all the boundary degrees of freedom, i.e., \eqref{f2d_d0f4} in the present case;  and in order to preserve the {\it B-compatibility}  we also need the degrees of freedom \eqref{f2d_d0f5}. Concerning the space to be preserved, the obvious choice would be $\SS=BDM_k\equiv (\P_k)^2$ if  $k_d=k-1$, and $\SS=RT_k$ if $k_d=k$. Clearly, these are not the only possible reasonable choices. In particular cases, other choices could also be valuable. For instance, if we know that the $H(\div)$ component of the
 solution of our problem is a gradient, we can restrict out attention to the case of the gradients of $\P_{k+1}$
 (as suggested, for instance, in \cite{BFM}. See also \cite{MFD23} in the context of Mimetic Finite Differences).
 
 Here however, we don't want to enter the details of a very general setting. Hence, we will limit ourselves to the cases
 $k_d=k$ and $k_d=k-1$, that, as we shall see, can be treated with the same arguments. For this, we will denote simply by $\SS_k$ the space to be preserved, knowing that it should be either $BDM_k$ or $RT_k$. Still following 
 Section \ref{seregen}, we go then  hunting for the space $\Z$ in \eqref{defZG} that in {\it both} our cases
 reduces to
 \begin{equation}
 \Z_k:=\{\vv\in (\P_k)^2 \mbox{ such that } \div\vv=0  \mbox { in } \E \mbox{ and }\vv\cdot\nn=0\mbox { on }
 \partial\E\} .
\end{equation}
Assuming for simplicity that $\E$ is simply connected, $\Z_k$ can also be written as
 \begin{equation}\label{defZk}
 \Z_k=\brot \Big( \P_{k+1}\cap H^1_0(\E)\Big) .
\end{equation}
\subsection{Characterization of $\Z_k$}
Following \cite{SERE-Nod} we  start by observing that, for $r$ integer $\ge 1$, if a polynomial ${\color{black}p_r\in\P_{r,2}}$  
vanishes identically on a segment (with positive measure) belonging to the straight line with equation
$ax+by+c=0$, then $p_r$  can be written in the form
\begin{equation}
p_r(x,y)=(ax+by+c)\,q_{r-1}(x,y)
\end{equation}
for some polynomial $q_{r-1}$ of degree $r-1$. As a consequence, if a polynomial {\color{black}$p_r$} in $\P_{r,2}$ vanishes identically
on $r+1$ segments (with positive measure) belonging to $r+1$ {\it different} straight lines, then $p_r$ is identically zero. So far so good. Now, to the polygon $\E$ we attach the integer number $\eta_\E$ defined as
\begin{equation}\label{defeta} 
\eta_{\E}:= \mbox{\it the minimum number of  straight lines necessary to cover $\partial \E$,} 
\end{equation}
and we recall the following obvious but useful property (already used in \cite{SERE-Nod}).
\begin{prop}\label{Z}
 Let $p_r\in\P_{r,2}$ be a polynomial of degree $r$ that vanishes identically on  $\partial\E$. Then
 for $r<\eta_E$ we have  $p_r\equiv 0$, and for $r\ge\eta_\E$ we have that $p_r$ must be of the form
$p_r=q_{r-\eta_\E}b_{\eta}$,  where $q_{r-\eta_\E}$ is a polynomial of degree $r-\eta_\E$ and $b_{\eta}$ is a polynomial of degree $\eta_\E$ that vanishes identically on $\partial\E$.\qed
\end{prop}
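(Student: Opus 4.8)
The plan is to prove Proposition~\ref{Z} in two stages, corresponding to the two ranges of $r$. The main tool is the elementary factorization fact recalled just before the statement: if $p_r\in\P_{r,2}$ vanishes identically on a segment of positive length lying on the line $ax+by+c=0$, then $(ax+by+c)$ divides $p_r$, leaving a quotient of degree $r-1$. I would apply this repeatedly, one line at a time.

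First I would handle the case $r<\eta_\E$. Let $\ell_1,\dots,\ell_m$ be a minimal collection of straight lines covering $\partial\E$, so $m=\eta_\E$; write $\ell_i$ as the zero set of a linear form $L_i$. Since $p_r$ vanishes on all of $\partial\E$, and $\partial\E$ contains a positive-length segment on each $\ell_i$ (this is exactly what it means for the lines to cover the polygon boundary, each contributing at least one full edge), I can peel off the factors one by one: $p_r=L_1\,q^{(1)}$ with $\deg q^{(1)}=r-1$; since the $\ell_i$ are \emph{distinct}, $q^{(1)}$ still vanishes on the segments of $\ell_2,\dots,\ell_m$ (where $L_1$ does not vanish identically, only at isolated points), so $p_r=L_1L_2\,q^{(2)}$ with $\deg q^{(2)}=r-2$, and so on. After $j$ steps, $p_r$ has degree-$j$ factor $L_1\cdots L_j$ times a polynomial of degree $r-j$. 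If $r<m=\eta_\E$, then at step $j=r+1$ we would need a polynomial of negative degree in the quotient, i.e.\ the process forces $p_r\equiv 0$; more carefully, after $r$ steps we obtain $p_r=L_1\cdots L_r\,c$ with $c$ a constant, and this must vanish on the remaining line $\ell_{r+1}$, forcing $c=0$. Hence $p_r\equiv 0$.

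For $r\ge\eta_\E$, the same peeling argument applied to all $m=\eta_\E$ lines gives $p_r=L_1\cdots L_{\eta_\E}\,q_{r-\eta_\E}$ with $\deg q_{r-\eta_\E}=r-\eta_\E$. Setting $b_\eta:=L_1\cdots L_{\eta_\E}$, we have a polynomial of degree exactly $\eta_\E$ that vanishes on $\partial\E$ (since each $L_i$ vanishes on its covering line, hence on the corresponding portion of $\partial\E$, and together they cover all of $\partial\E$), and $p_r=q_{r-\eta_\E}\,b_\eta$, which is the claimed form.

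The only point requiring a little care — and the place where I expect the only genuine (though minor) obstacle — is the bookkeeping that at each stage the ``already used'' linear factors $L_1,\dots,L_{j}$ do not vanish identically on the next segment (lying on $\ell_{j+1}$), so that the quotient genuinely vanishes there and the factorization fact can be reapplied. This is immediate from the distinctness of the covering lines: a linear form $L_i$ restricted to a different line $\ell_{j+1}$ is a nonzero affine function of the line's parameter, vanishing at most at one point of the segment, so the quotient (which is continuous) must vanish identically on the segment. Once this is observed, the induction runs without further difficulty, and the degree count is automatic. (The minimality in the definition of $\eta_\E$ is not actually needed for the proof — any covering collection works — but using the minimal one gives the sharp threshold stated.)
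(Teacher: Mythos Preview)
Your proposal is correct and follows exactly the approach the paper intends: the paper does not give an explicit proof of Proposition~\ref{Z} (it is stated with a terminal \qed\ and called ``obvious''), relying instead on the preceding factorization observation that a polynomial vanishing on a segment of the line $ax+by+c=0$ is divisible by $ax+by+c$. Your argument is simply a careful fleshing-out of that iterated factorization, including the one point the paper leaves implicit --- that distinctness of the covering lines guarantees the quotient still vanishes on each remaining segment --- so nothing needs to be changed.
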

As an immediate consequence of this and of \eqref{defZk}, we have that 
 \begin{equation}\label{defZZk}
 \Z_k:=\left\{\begin{array}{lr}
 \{{\bf 0}\} &\text{  for } \eta_\E>k,\\
 \brot \Big(b_{\eta_\E}\P_{k-\eta_\E+1}\Big)&\text{  for } \eta_\E\le k.
 \end{array}\right.
\end{equation}
\begin{remark}\label{changesign} If $\E$ is convex then $b_\eta$ will not change sign in $\E$, a property that will become handy in just a while. Moreover, assume that $\E$ is not convex, but there are only two ``re-entrant'' edges  (more precisely: edges belonging to straight lines that intersect the interior of $\E$, and consequently whose equations change sign inside $\E$), and let $\gamma_2$ be the second degree polynomial that vanishes on the  two straight lines that contain the two re-entrant edges. In this case it is easy to see that the product $b_\eta\,\gamma_2$ does not change sign
in $\E$. \qed
\end{remark}

The following Lemma is an immediate consequence of Proposition \ref{Z}.
\begin{lem}\label{addoff2d} 
Assume, for simplicity, that $\E$ is convex, and 
let $(k, k_d, k_r)$  be a triplet of integers with $k\ge 0$, $k_d\ge \max\{0, k-1\}$ and $k_r\ge k+1-\eta_\E$.
Assume that $\pp_k\in \Z_k$ is  such that
\begin{equation}\label{f2d_d0f6eq0}
\int_{\E}\pp_k\cdot(\xx^{\perp}q_{s})\dE=0 \quad\mbox{ for all $q_{s}\in\P_{k+1-{\eta_\E}}(\E)$}.
\end{equation}
Then $\pp_k\equiv {\bf 0}$.
\end{lem}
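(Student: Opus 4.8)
The statement asserts that the extra degrees of freedom $D_3$ restricted to the minimal required index range $\P_{k+1-\eta_\E}$ are already $\SS_k$-identifying on the kernel $\Z_k$; i.e.\ that the pairing $(\pp_k, q_s)\mapsto \int_\E \pp_k\cdot\xx^\perp q_s\dE$ is non-degenerate when $\pp_k$ ranges over $\Z_k$ and $q_s$ over $\P_{k+1-\eta_\E}$. By \eqref{defZZk}, an element $\pp_k\in\Z_k$ has the form $\pp_k=\brot(b_{\eta_\E}\,r)$ for some $r\in\P_{k-\eta_\E+1}$, so it suffices to show that if $\int_\E \brot(b_{\eta_\E}r)\cdot\xx^\perp q\dE = 0$ for all $q\in\P_{k+1-\eta_\E}$, then $b_{\eta_\E}r\equiv 0$ (hence $\pp_k\equiv\mathbf 0$, since $\partial\E$ has positive measure and $r$ would have to vanish there). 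The natural first move is to rewrite the bilinear form: using $\xx^\perp q = \brot(\xx\cdot\xx\, \text{something})$ is clumsy, so instead I would integrate by parts using the identity $\rot(\xx^\perp q) = $ (a multiple of) $q$ plus lower terms — more precisely, recall from \eqref{rotxpq} that for each $q_s$ there is a unique $\widetilde q_s\in\P_s$ with $\rot(\xx^\perp \widetilde q_s)=q_s$, so the span of $\{\xx^\perp q : q\in\P_{k+1-\eta_\E}\}$ is exactly $\{\xx^\perp\widetilde q\}$ and $\rot$ maps it onto $\P_{k+1-\eta_\E}$. Then the integration-by-parts formula for $\brot$ and $\rot$ on $\E$ gives
\[
\int_\E \brot(b_{\eta_\E}r)\cdot\xx^\perp q\dE = \int_\E b_{\eta_\E}r\,\rot(\xx^\perp q)\dE + \text{(boundary term)},
\]
and the boundary term vanishes because $b_{\eta_\E}$ vanishes identically on $\partial\E$.

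So the hypothesis reduces to: $\int_\E b_{\eta_\E}\, r\, w\dE = 0$ for every $w$ in the image $\rot(\xx^\perp\P_{k+1-\eta_\E})$, which by \eqref{rotxpq} is \emph{all} of $\P_{k+1-\eta_\E}$. In particular we may take $w=r$ (legitimate since $r\in\P_{k-\eta_\E+1}=\P_{k+1-\eta_\E}$), obtaining $\int_\E b_{\eta_\E}\, r^2\dE = 0$. Here is where convexity of $\E$ is used: by Remark \ref{changesign}, $b_{\eta_\E}$ does not change sign in $\E$, so $b_{\eta_\E}r^2$ is of one sign, and the vanishing of its integral forces $b_{\eta_\E}r^2\equiv 0$ in $\E$, hence $r\equiv 0$ (as $b_{\eta_\E}\not\equiv 0$ and both are polynomials on a set of positive measure). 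Therefore $\pp_k=\brot(b_{\eta_\E}r)=\brot(0)=\mathbf 0$, which is the claim.

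I would double-check two supporting points. First, that the degree bookkeeping is consistent: $\pp_k\in(\P_k)^2$ requires $b_{\eta_\E}r$ to have degree $\le k+1$, i.e.\ $\deg r\le k+1-\eta_\E$, matching the index in \eqref{f2d_d0f6eq0} exactly — this is why the test space is precisely $\P_{k+1-\eta_\E}$ and not smaller. Second, the hypotheses $k_d\ge\max\{0,k-1\}$ and $k_r\ge k+1-\eta_\E$ are only there to guarantee that $\Z_k$ genuinely equals the kernel of the kept degrees of freedom and that $D_3$ can legitimately test against all of $\P_{k+1-\eta_\E}$ inside the actual VEM space $\bV^f_{k,k_d,k_r}(\E)$; they are not needed for the purely algebraic argument above. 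The main (mild) obstacle is the integration-by-parts step: one must be careful that $\brot(b_{\eta_\E}r)\cdot\xx^\perp q$ pairs correctly with $b_{\eta_\E}r\,\rot(\xx^\perp q)$ up to the right boundary term and no stray sign — this is a routine Green's identity for the $\rot$/$\brot$ pair in 2D, and the boundary contribution is killed by $b_{\eta_\E}|_{\partial\E}=0$, so no extra regularity or geometric hypothesis is needed there. The non-convex case alluded to in Remark \ref{changesign} would go through verbatim after replacing $b_{\eta_\E}$ with $b_{\eta_\E}\gamma_2$ and enlarging the test space accordingly, but for the stated (convex) Lemma the argument is complete as above.
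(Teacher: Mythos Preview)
Your proof is correct and follows essentially the same route as the paper: write $\pp_k=\brot(b_{\eta_\E}\psi)$ via \eqref{defZZk}, integrate by parts (the boundary term dropping because $b_{\eta_\E}|_{\partial\E}=0$), use \eqref{rotxpq} to see that testing against $\xx^\perp q_s$ for $q_s\in\P_{k+1-\eta_\E}$ amounts to testing $b_{\eta_\E}\psi$ against all of $\P_{k+1-\eta_\E}$, and then choose the test function equal to $\psi$ itself to obtain $\int_\E b_{\eta_\E}\psi^2\dE=0$, concluding by convexity. The only cosmetic difference is that the paper selects the specific $q_s$ with $\rot(\xx^\perp q_s)=\psi$ up front rather than first passing to the full image and then specializing, but the argument is the same.
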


\begin{proof}
Using \eqref{defZZk} we have that if $k+1<\eta_\E $ the proof is immediate, while  for $k +1\ge\eta_\E$ then $\pp_k=\brot(b_{\eta}\psi)$ for some polynomial $\psi$ of degree $k+1-\eta_\E$. Then we use \eqref{rotxpq} with $s=k+1-\eta_E$ and $p_s=\psi$  to get a $q_s$ such that  $\rot(\xx^\perp\,q_s)=\psi$, 
and we insert it in \eqref{f2d_d0f6eq0} to obtain
\begin{multline}\label{f2d_d0f6eq01}\!
 0\!=\!\!\int_{\E}\pp_k\cdot\,\xx^{\perp} q_s\dE\!=\!\!\int_{\E}\brot(b_{\eta}\,\psi)\cdot\,\xx^{\perp} q_s\dE\\
\!=\!\!\int_{\E}(b_{\eta}\,\psi)\rot\,(\xx^{\perp} q_s)\dE\!= \!\!\int_{\E}b_{\eta}\psi^2\dE
\end{multline}
that ends the proof since $b_\eta$ does not change sign.
\end{proof}

\begin{remark}\label{noconvex} If we give up the convexity assumption we  could always follow the path of Subsection \ref{system}.  Otherwise, we  should find some ``ad hoc'' alternative ways to design suitable sets of conditions that, in a way similar to Lemma \ref{addoff2d}, imply that $\pp_k={\bf 0}$. This is surely possible in many circumstances. For instance, assume that $\E$ is a quadrilateral with two re-entrant edges, and $k=3$ (so that $k+1-\eta_\E=0$,  and \eqref{f2d_d0f6eq0}  would be required just for $q_s$ constant). Assuming that the origin is in the re-entrant vertex, we could use, instead of \eqref{f2d_d0f6eq0},
\vskip-0.5truecm
\begin{equation}
\int_{\E}\pp_k\cdot(\xx^{\perp}\gamma_2)\dE=0,
\end{equation}
\vskip-0.1truecm
\noindent where $\gamma_2$ is ``the product of the two re-entrant edges'' as in Remark \ref{changesign}.
 It is immediate to see that, as the origin is in the re-entrant vertex, then $\gamma_2$ is a {\bf homogeneous}
 polynomial of degree 2, so that from \eqref{homos} we have $\rot(\xx^\perp\,\gamma_2)=4\gamma_2$. Hence, still following  Remark \ref{changesign}, we have that $b_\eta\gamma_2$ does not change sign,  and therefore the argument in \eqref{f2d_d0f6eq01} still
goes through. Indeed, always for $k+1=\eta_\E$ we would have now that $\varphi=\lambda b_\eta$ for some constant $\lambda$,
and then:
\vskip-0.6truecm
\begin{multline}\label{f2d_d0f6eq011}
\hskip1.2truecm 0=\int_{\E}\pp_k\cdot(\xx^{\perp} \gamma_2)\dE=\int_{\E}\brot(\lambda\, b_{\eta})\cdot(\xx^{\perp} \gamma_2)\dE=\\
=\int_{\E}(\lambda\, b_{\eta})\,{\rot}\,(\xx^{\perp} \gamma_2)\dE=4\int_{\E}\lambda\, b_{\eta}\gamma_2 \dE
\end{multline}
that implies $\lambda=0$ and ends the proof.
However, a detailed study of the different cases of non convex polygons and of the possible remedies goes beyonds the scope of the present paper,
and in any case we always have the systematic path of Subsection \ref{system}. Throughout the sequel of the paper, for simplicity, we will stick to the convexity assumption. \qed
\end{remark}

 \subsection{The Serendipity face spaces}
 
 At this point we just have to follow the general setting of Section \ref{seregen}: define as in \eqref{defDD} the mapping $\DD_{\SN}$ of the degrees of freedom, use it to define the operator $\RR^{\SN}$ as in \eqref{defRS}, and finally define our serendipity space as in \eqref{defVS}.
 
 \begin{remark}\label{BconS}
 It is easy to see that even for our Serendipity spaces we can construct an interpolation operator,
 using this time the  degrees of freedom \eqref{f2d_d0f4} and 
 \eqref{f2d_d0f5}, plus those in \eqref{f2d_d0f6eq0} when $\eta_\E\ge k$. It is also easy to see that such an interpolation operator will be B-compatible.   \qed
 \end{remark}

The new Serendipity elements  can again be compared, for triangular and quadrilateral domains, with classical finite elements of different types. The comparison with {\bf triangular} elements is, in some sense, not very interesting, since (as it can be easily checked) the new Serendipity Virtual Elements  coincide now exactly with the classical (polynomial) Finite Elements, having the same number of degrees of freedom, and being one included in the other. On the other hand, on {\bf quadrilaterals} we have now a considerable gain, as it can be seen in Figure \ref{clas-tri-2}.
 \begin{figure}[htbp]
  \begin{center}
    \includegraphics[width=14.0cm]{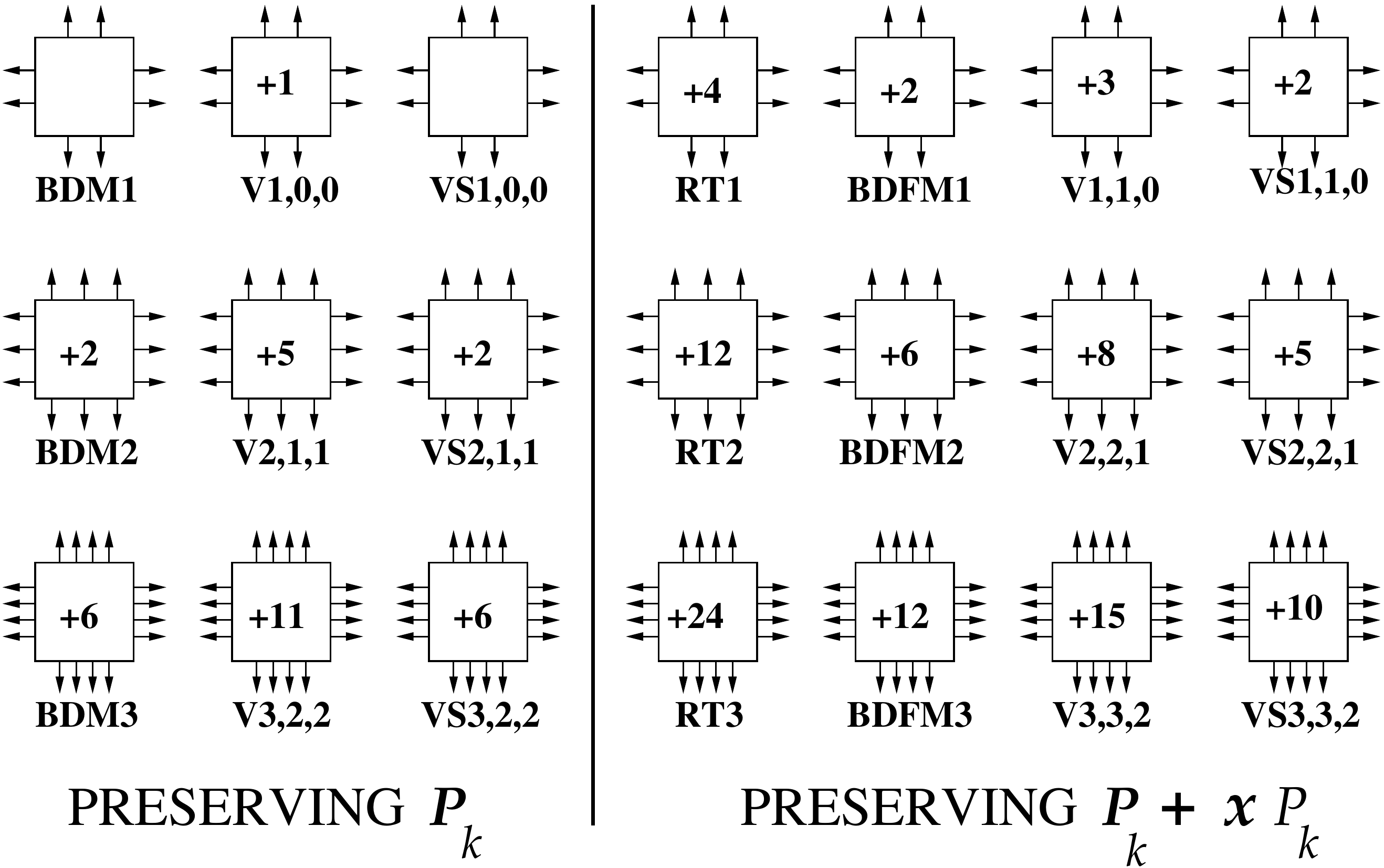}
  \end{center}
  \vskip-0.3truecm
  {\caption{FEM spaces, VEM spaces  and  Serendipity ones}\label{clas-tri-2}}
  \vskip0.3truecm
\end{figure}
%
%
%
%
In particular we can see that the new $VEMS^f_{k,k-1,k-1}$ (that is, Serendipity with $\SS_k=BDM_k$) have the same number of degrees of freedom as the corresponding \textit{BDM} spaces (although it has also to be noted  that on the one hand VEMs are much more robust with respect to geometric distortions, but on the other hand the elimination of the internal degrees of freedom require an additional work that is not present in the traditional Finite Elements).  Instead, the $VEMS^f_{k,k,k-1}$ (that is, the Serendipity VEMs with $\SS_k=RT_k$) have now much  less degrees of freedom than the corresponding Finite Element  {\it RT} spaces. And we recall once more that VEMs are defined on almost arbitrary geometries. The comparison, actually, should be done with Serendipity 
{\it RT} spaces. In that case we have exactly that same number of degrees of freedom as (for instance)
the elements in \cite{A-A} (but again, with much more generality in the geometry and additional work inside the elements).


\begin{remark}\label{angle} For stability reasons, in practice, in the definition \eqref{defeta} of $\eta_E$   it would be wise to apply the same (slight) correction that we used already in \cite{SERE-Nod} for nodal elements: it consists in taking a
smaller value of $\eta_\E$ (and hence using more degrees of freedom) whenever we have two or more edges that belong {\bf almost} to
the same straight line. Practically this corresponds to decide (once and for all) a minimum angle $\theta_0$ and then
to consider that two straight lines are ``distinct'' only  if they cross with four angles all bigger than $\theta_0$. Parallel lines
can be accepted if their distance is not too small (compared with the diameter of $\E$).
Note that, in the framework of the general systematic strategy of Subsection \ref{system},
this would correspond to decide the minimum amount of the smallest singular value in the matrix $\calD$ in \eqref{sottom} to be accepted in order to say that it has ``maximum rank''. \qed

\end{remark}

 \begin{remark}\label{stability} Always for stability reasons, the use of the Euclidean scalar product in $\R^S$ in \eqref{defRS} is recommended
 only if the degrees of freedom ``scale in the same way'' (a concept widely used in the VEM context: see e.g. \cite{volley}). \qed
 \end{remark}

 \subsection{The lazy choice, the stingy choice, and the static condensation} Always following what has been done in   \cite{SERE-Nod},
 we can distinguish different types of strategies to be adopted in coding these elements, in particular when dealing with very general geometries. The two extremes of this set of possible
 choices have been called  {\bf the stingy choice} and {\bf the lazy choice.} Here we recall the
 basic ideas behind them, pointing out, however, that there a number of intermediate strategies, to be used to adapt to the different situations.
 The {\bf stingy choice} corresponds to use the Serendipity strategy in order to {\it drop as many internal degrees of freedom as we can}. This, referring to Remark \ref{angle}, and considering for simplicity the case of convex polygons, would correspond to compute $\eta_E$ taking a small minimum angle, and then reduce the number of internal
 additional degrees of freedom to $\pi_{s,2}$ with $s\le k-\eta_\E+1$. In a more general context (even without the convexity assumption), following  the general strategy of Subsection \ref{system}, this would imply, for instance, to take slices of dimension 1,  and discard each one that does not increase the rank of the submatrix
$\calD_{\MM+\sigma}$ in \eqref{sottom}.
 
 The {\bf lazy choice}, instead, would correspond to {\it minimize the work necessary to choose the additional d.o.f. \eqref{addit}}. This can be done, for instance, pretending that $\eta_\E=3$, and therefore 
 considering, as additional degrees of freedom \eqref{f2d_d0f6eq01}, all the polynomials of the form
 $\xx^\perp q$ with $q\in\P_{k-2} $. Note that already on a general quadrilateral mesh, for any quadrilateral that is not degenerated into a triangle
 our theory allows to take in \eqref{addoff2d} $q\in\P_{k-3}$,  using only $\pi_{k-3,2}$ additional degrees of freedom and saving $k-1$ (that is:
 $\pi_{k-2,2}-\pi_{k-3,2}$) degrees of freedom with respect to the triangular case. But, as a counterpart, it would require to check the ``non-degeneracy into a triangle''  of every
 element. A non-obvious trade-off. For more general decompositions, with a high $k$, both the cost and the gain of the stingy choice would  be more conspicuous. Then the decision could rely on several factors, including the degree $k$ but also, for instance, the number of problems that we plan to solve on the same grid.
 
 Another matter that would be worth discussing is the comparison with {\it static condensation} techniques,
 that, when solving with a sophisticated direct method, could become almost automatic and be  reasonably cheap. There too,
 the gain/loss assessment is not always obvious. We just point out that the present serendipity procedure is {\bf not} equal to static condensation (as, for instance, the static condensation of the internal node of a $\Q_2$ nine-node finite element is not equal to use a Serendipity eight-node element). Moreover we point out that, if Serendipity elements are used on  the faces of a three-dimensional decomposition, then the gain  is much more clear, since the static condensation of face unknowns is surely far from obvious. 
 







  


  


  

 
 
 
 
\section{General edge elements in 2d}\label{edge2d}

\subsection{Edge VEM spaces and  degrees of freedom}\label{edge2d-1} 
The case of {\it edge} elements  in two dimensions can be treated exactly as we did for {\it face} elements. We summarize them quickly. We set, 
for every $k\ge 0$, $k_d\ge -1$, and $k_r\ge 0$:
\begin{equation}\label{VEMe2d}
\bV^e_{k,k_d,k_r}(\E):=\{\vv|\vv\cdot\tt_e\!\in\!\P_k(e) \forall \mbox{ edge } e,\, \div\vv\! \in\!\P_{k_d}(\E),\,\rot\vv\!\in\!\P_{k_r}(\E)\},
\end{equation}
with the degrees of freedom: 
\begin{align}
&\widetilde D_1:\quad\int_e\vv\cdot\tt_e\,q_k\de\quad\mbox{ for all $q_{k}\in\P_k(e)$, for all edge $e$,}
\label{e2d_d0f4}\\
&\widetilde D_2:\quad\mbox{ for $k_r\ge 1$: }\int_{\E}\vv\cdot\brot q_{k_r}\dE \quad\mbox{ for all $q_{k_r}\in\P_{k_r}(\E)$},
\label{e2d_d0f5}\\
&\widetilde D_3:\quad\mbox{ for $k_d\ge 0$: }\int_{\E}\vv\cdot\xx\,q_{k_d}\dE \quad\mbox{ for all $q_{k_d}\in\P_{k_d}(\E)$}.
\label{e2d_d0f6}
\end{align}
%
%
Similar to Proposition \ref{unidoff2d} we have
\begin{prop}\label{unidofe2d}
The degrees of freedom \eqref{e2d_d0f4}-\eqref{e2d_d0f6} are unisolvent.
\end{prop}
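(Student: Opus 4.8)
The statement is the exact 2d-edge analogue of Proposition~\ref{unidoff2d}, and since the edge case is obtained from the face case by a rotation of $\pi/2$, the plan is to transcribe the face-case argument with $\nn_e\leftrightarrow\tt_e$, $\bgrad\leftrightarrow\brot$, $\xx^\perp\leftrightarrow\xx$, and $\rot\leftrightarrow\div$. First I would note that the number of degrees of freedom equals $\dim\bV^e_{k,k_d,k_r}(\E)$, so unisolvence reduces to showing that $\widetilde D_1=\widetilde D_2=\widetilde D_3=0$ forces $\vv\equiv\mathbf 0$. The first structural step is the decomposition analogous to \eqref{decoe2d}: using \eqref{divxq} with $s=k_d$, for every $\vv\in\bV^e_{k,k_d,k_r}(\E)$ pick $p_{k_d}\in\P_{k_d}$ with $\div(\xx\,p_{k_d})=\div\vv$; then $\div(\vv-\xx\,p_{k_d})=0$, so in a simply connected $\E$ one has $\vv-\xx\,p_{k_d}=\brot\phi$ for some scalar potential $\phi$. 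Hence every $\vv$ can be written uniquely as $\vv=\brot\phi+\xx\,p_{k_d}$.

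Next I would run the vanishing argument. From $\widetilde D_1=0$ we get $\vv\cdot\tt_e=0$ on every edge, i.e. the tangential trace of $\vv$ on $\partial\E$ vanishes; combined with $\widetilde D_2=0$ and an integration by parts (using $\int_\E\vv\cdot\brot q=\int_\E\rot\vv\,q-\int_{\partial\E}(\vv\cdot\tt)q$) this yields $\rot\vv=0$ in $\E$. Consequently $\int_\E\vv\cdot\brot\varphi\dE=0$ for all $\varphi\in H^1(\E)$, by the same integration by parts with the boundary term killed by the vanishing tangential trace and the interior term by $\rot\vv=0$. Finally, testing with the decomposition $\vv=\brot\phi+\xx\,p_{k_d}$: the $\brot\phi$ part integrates to zero by the identity just established, and the $\xx\,p_{k_d}$ part integrates to zero by $\widetilde D_3=0$ (note $p_{k_d}\in\P_{k_d}$, which is exactly the test space in \eqref{e2d_d0f6}), so $\int_\E|\vv|^2\dE=0$ and $\vv\equiv\mathbf 0$.

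The main obstacle — such as it is — is purely bookkeeping: one must make sure the polynomial decomposition \eqref{decompPk2rx} (equivalently \eqref{divxq}) is being applied with the correct degree so that $p_{k_d}$ lands in $\P_{k_d}$ and is therefore an admissible test function in $\widetilde D_3$, and likewise that the constraint $k_r\ge 0$ (needed for $\rot\vv\in\P_{k_r}$ to be meaningful) and $k_d\ge -1$ are compatible with the integration-by-parts step. There is also the minor point, exactly as in Remark~\ref{menouno}, that the case $k_r$ genuinely at its minimum causes no trouble here because $\rot\vv=0$ is derived, not assumed a priori. I would also remark, parenthetically, that the B-compatibility discussion and the computation of $L^2$-projections transfer verbatim (with $\xx$ in place of $\xx^\perp$), but the proposition as stated only asks for unisolvence, so the three-line argument above suffices.
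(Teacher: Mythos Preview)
Your proposal is correct and follows exactly the route the paper intends: the paper gives no separate proof for this proposition, merely stating that it is ``similar to Proposition~\ref{unidoff2d}'', and your transcription of that argument under the rotation $\nn_e\leftrightarrow\tt_e$, $\bgrad\leftrightarrow\brot$, $\xx^\perp\leftrightarrow\xx$, $\div\leftrightarrow\rot$ (using \eqref{divxq} in place of \eqref{rotxpq}) is precisely what is meant. The bookkeeping you flag is handled correctly; in particular the borderline case $k_r=0$ (where $\widetilde D_2$ is absent) still gives $\rot\vv=0$ because $\rot\vv\in\P_0$ and $\int_\E\rot\vv=\int_{\partial\E}\vv\cdot\tt=0$.
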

Moreover, proceeding as  in Remark \ref{deponly} we easily see that out of the above d.o.f. one can compute
\begin{equation}\label{prokr2d}
\int_{\E}\vv\cdot\qq\dE\quad\mbox{\rm for every $\qq\in(\P_{k_d+1})^2$},
\end{equation}
 and hence the projection operator on $(\P_{k_d+1}(\E))^2$.
\begin{remark} It is easy to see that when used in combination with the degrees of freedom \eqref{e2d_d0f4}, the degrees
 of freedom \eqref{e2d_d0f5} can equivalently be replaced by
 \begin{equation}\label{f2d_d0f5-altr}
\bullet\mbox{ for $k_r\ge 1$: }\int_{\E}\rot\vv\,q_{k_r}\dE \quad\mbox{ for all $q_{k_r}\in\P_{k_r}^0(\E)$}.
\end{equation}
Here too we could argue as in Remark \ref{freedom} regarding other equivalent but possibly not ``computationally equivalent''
degrees of freedom. \qed
\end{remark}

\begin{remark} In almost all applications, the value of $k_r$ in \eqref{VEMe2d} is either equal to $k$ or equal to $k-1$. This, as we already saw for face elements, corresponds to choices mimicking the N\'ed\'elec Finite element spaces of  first and second
kind (that is, {\it N1} and {\it N2}), and, ultimately, the choice among the two cases depends on the accuracy that we demand in $H(\rot)$ (and not only in $L^2$). \qed
\end{remark}

\begin{remark} As we did in Remark \ref{menouno}, for Edge Virtual Elements we
cannot take $k_r=-1$, unless we give up the possibility of having a local basis. \qed
\end{remark}

\subsection{Edge Serendipity  VEMs in 2d}\label{sere-edge-2D}

We can now extend all the definitions and results obtained for {\bf Face Serendipity VEMs} to the case of  {\bf Edge Serendipity VEMs},
just by changing, as we did so far, ``$\nn$''  into ``$\tt$'',  then ``$\div$'' into ``$\rot$'', and finally ``$k_d,\, k_r$'' into ``$k_r,\, k_d$''. 
In particular we have now
 \begin{equation}
 \Z_k:=\{\vv\in (\P_k)^2 \mbox{ such that } \rot\vv=0  \mbox { in } \E \mbox{ and }\vv\cdot\tt=0\mbox { on }
 \partial\E\} .
\end{equation}
Assuming for simplicity that $\E$ is simply connected, $\Z_k$ can also be written as
 \begin{equation}\label{defZkg}
 \Z_k:=\bgrad \Big( \P_{k+1}\cap H^1_0(\E)\Big)
\end{equation}
that can be analyzed exactly as in the case of face VEM. Recalling Proposition \ref{Z} we have now
\begin{equation}\label{defZZke}
 \Z_k:=\left\{\begin{array}{lr}
 \{{\bf 0}\}, &\text{  for } \eta_\E>k,\\
 \bgrad \Big(b_{\eta_\E}\P_{k-\eta_\E+1}\Big)&\text{  for } \eta_\E\le k.
 \end{array}\right.
\end{equation}
Then for our Serendipity space
we just have to keep \eqref{e2d_d0f4}-\eqref{e2d_d0f5} plus, for $k+1-\eta_\E\ge 0$, the additional  d.o.f. 
\begin{equation}\label{f2d_SERE3-add}
\hskip-1.7truecm\bullet\;\int_{\E}\vv\cdot\xx\,q \dE \quad\mbox{ for all $q\in\P_{k+1-\eta_E}(\E)$}.
\end{equation}
Then everything proceeds as a mirror image of what has been done and said in Sections \ref{seregen} and \ref{sereface}. In particular, after choosing $\SS$ as (tipically) $N1_k$ or $N2_k$, we can construct a projector $\RR^{\SN}:\bV^e_{k,k_d,k_r}(\E)\rightarrow\SS$, based on the degrees of freedom
\eqref{e2d_d0f4}, \eqref{e2d_d0f5}, and   \eqref{f2d_SERE3-add}, and then define, as
in \eqref{defVS},
\begin{equation}\label{defVSe2d}
\bV^e_{k,S,k_r}(\E):=\{v\in\bV^e_{k,k_d,k_r}(\E)\mbox{ s. t. }\FF_i(v)\!=\!\FF_i(\RR^{\SN}v),\,i\!=\!\SN+1,..,\NN\}.
\end{equation}


  
  


%
%
%

%
%
%
%
%
%
%
%
%
%
%
%
%
%
%
%
%
%
%
%
%
%
%
%
%
%
%
%
%
%
%

 \section{General face elements in 3d}\label{face3d}
 
\subsection{The spaces and the degrees of freedom}
The definition of the spaces $\bV^f_{k,k_d,k_r}$ in three dimensions is an immediate generalization 
of the two-dimensional case, essentially using \eqref{decompPk3gxw} instead of \eqref{decompPk2gxp}.  For  $k\ge 0$, $k_d\ge 0$, and $k_r\ge -1$ they can be defined as
\begin{multline}\label{VEMf3d}
\bV^f_{k,k_d,k_r}(\E):=\{\vv|\, \mbox{ such that }\vv\cdot\nn_f\in\P_k(f) \,\forall \mbox{ face } f,\; \\\div\vv\in\P_{k_d}(\E),\,\bcurl\,\vv\in(\P_{k_r}(\E))^3\}.
\end{multline}

 It is easy to see (arguing as in the two-dimensional case) that we can take, as degrees of freedom in $\bV^f_{k,k_d,k_r}(\E)$, the following ones
 \begin{align}
 \label{f3d_dof1}
& \bullet \int_f\vv\cdot\nn_f\,q_k\df\quad\mbox{ for all $q_{k}\in\P_k(f)$, for all face $f$},
\\
&\bullet\mbox{ for  }k_d\ge 1: \quad
\label{f3d_dof2kd}
 \int_{\E}\vv\cdot\bgrad q_{k_d}\dE \quad\mbox{ for all $q_{k_d}\in\P_{k_d}(\E)$},
\\
&\bullet \mbox{ and for }k_r\ge 0:\quad 
\label{f3d_dof2kr}
 \int_{\E}\vv\cdot\xx\wedge\,\qq_{k_r}\dE \quad\mbox{ for all $\qq_{k_r}\in (\P_{k_r}(\E))^3$}.
\end{align}
It is also easy to see that, proceeding as in the proof of Proposition \ref{unidoff2d}, out of the above degrees of freedom one can compute the integral
\begin{equation}\label{prokr3d}
\int_{\E}\vv\cdot\qq\dE\mbox{ for every $\qq\in(\P_{k_r+1})^3$},
\end{equation}
and then the $L^2$-projection operator $\Pi^0_{k_r+1}$ on the space $(\P_{k_r+1,3})^3$.

\begin{remark} As we did in the previous cases, we can easily see that we could substitute the degrees of freedom 
\eqref{f3d_dof2kd} with the equivalent ones
\begin{equation}\label{f3d_dof2kd-alt}
\hskip-0.4truecm\bullet \int_{\E}\div\vv\, q_{k_d}\dE \quad\mbox{ for all $q_{k_d}\in\P_{k_d}^0(\E)$}.
\end{equation}
It is also immediate to see that the degrees of freedom \eqref{f3d_dof2kd-alt} are also ``computationally equivalent''
to \eqref{f3d_dof2kd}. \qed

\end{remark}

\begin{remark} \label{freedom-f3D}In different applications, one could give up the possibility to compute the $L^2$-projection operator $\Pi^0_{k_r+1}$ and use, instead of \eqref{f3d_dof2kr}, the  degrees of freedom
\begin{equation}\label{f3d_dof2kr-alt}
\hskip-0.0truecm\bullet \int_{\E}\bcurl\,\vv\cdot\qq_{k_r}\dE \quad\mbox{ for all $\qq_{k_r}\in (\P_{k_r}(\E))^3_{div}$}.
\end{equation}
(see the notation \eqref{divzero})
that are equivalent (but not ``computationally equivalent'', in the spirit of Remark \ref{freedom}) to \eqref{f3d_dof2kr}. \qed

\end{remark}

\subsection{{\color{black}Serendipity face elements in 3d}}
The construction of the Serendipity variants of the face Virtual Elements  defined in \eqref{VEMf3d}  is decidedly more complicated than in the two-dimensional case. As before, in order to have an $H(\textrm{div})$ conforming global space and preserve the B-compatibility, we will need to keep the degrees of freedom \eqref{f3d_dof1} and \eqref{f3d_dof2kd},  so that the Serendipity reduction
will act  only on the degrees of freedom \eqref{f3d_dof2kr}.

But the main difference here 
is in the characterization of the space $\Z_k$:
\begin{equation}\label{defZ3}
\Z_k(\E)=\{\zz\in(\P_k)^3\mbox{ such that }\zz\cdot\nn=0\mbox{ on }\partial\E,\mbox{ and }
\div\zz=0\mbox{ in }\E\}.
\end{equation}
Indeed, in two dimensions, the elements of $\Z_k$ were
the $\brot$ of a {\it scalar function} vanishing on the whole $\partial\E$, and their characterization (in Proposition \eqref{Z}) was relatively easy. In three dimensions, instead, we
have the $\bcurl$ of a {\it vector valued} potential whose tangential components vanish on all faces.

It is immediate to see that for $k=0$ and $k=1$ the space $\Z_k$ is reduced to $\{{\bf 0}\}$.  The characterization of the elements of $\Z_k$ for $k\ge 2$, instead, is less obvious, and to perform it for a general polyhedral geometry looks rather heavy and complex, so that it seems
advisable to stick on the systematic strategy of Subsection \ref{system}, unless the decomposition has some particular feature that could be exploited.

 However, to give an idea of the type of problems to be tackled, we report here, as an example, 
the treatment of the simplest case of a tetrahedral element (that might also come out handy if
we opt for some kind of {\it lazy choice}).

\subsection{The space $\Z_k(\E)$ for $k\ge 2$ on a tetrahedron}

We need some additional notation.  
Let $f_1,..,f_4$ be the faces of $\E$, let $\lambda_i$ be {\it the} $\P_1$
polynomial such that: $\lambda_i(\xx)=0$ is the plane containing the face $f_i$, and 
 the outward unit normal to $\E$ on $f_i$ is given by $\nn_i=\nabla\lambda_i$. Then, 
\begin{itemize}
\item let $b_4$ be the fourth degree polynomial $\lambda_1 \lambda_2\lambda_3\lambda_4$,
\item for $i=1,..,4$ let  $b_3^{(-i)}$ be the product of 
all the $\lambda_j$ with $j\neq i$.
\end{itemize}
We also recall the elementary equality
\begin{equation}\label{rotdelprod}
\bcurl(\vphi\nabla\psi)=\nabla \vphi\wedge\nabla\psi\qquad \forall \vphi, \psi \in H^1.
\end{equation}

\begin{prop}\label{prope1} With the above notation, for every polynomial $p$  and for every face $f_i$ (with  $1\le i\le 4$)
we have
\begin{itemize}
\item $(\nabla p\wedge \nabla \lambda_i)\cdot \nn_i=0$
\item $\div(\nabla p\wedge\nabla \lambda_i)=0$  
\item For every $i$ with $1\le i\le 4$, if $p$ contains $b_3^{(-i)}$ as a factor,  then 
we have $(\nabla p\wedge \nabla \lambda_i)\cdot\nn=0$ on all $\partial\E$. 
\end{itemize}
\end{prop}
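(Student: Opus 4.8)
The plan is to verify the three claims in order, each following from the elementary identity \eqref{rotdelprod} together with the product structure of the polynomials $b_3^{(-i)}$, $b_4$. First I would apply \eqref{rotdelprod} with $\vphi=p$ and $\psi=\lambda_i$: since $\lambda_i$ is affine, $\nabla\lambda_i=\nn_i$ is a constant vector, so $\bcurl(p\,\nabla\lambda_i)=\nabla p\wedge\nabla\lambda_i$. For the first bullet, dotting with $\nn_i=\nabla\lambda_i$ gives $(\nabla p\wedge\nabla\lambda_i)\cdot\nabla\lambda_i=0$ by the basic property of the scalar triple product (a vector wedged with something is orthogonal to that something), so this is immediate and needs no further work.

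For the second bullet, I would compute $\div(\nabla p\wedge\nabla\lambda_i)$ using the standard identity $\div(\uu\wedge\vv)=\vv\cdot\bcurl\,\uu-\uu\cdot\bcurl\,\vv$. Here $\uu=\nabla p$ and $\vv=\nabla\lambda_i$, and both are gradients, so $\bcurl\,\uu=\bcurl(\nabla p)=0$ and $\bcurl\,\vv=\bcurl(\nabla\lambda_i)=0$; hence the divergence vanishes identically. (Alternatively, $\nabla p\wedge\nabla\lambda_i=\bcurl(p\,\nabla\lambda_i)$ by the first step, and the divergence of any $\bcurl$ is zero — this is even quicker.)

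The third bullet is the one requiring a genuine argument, and I expect it to be the main point. Fix $i$ and suppose $p=b_3^{(-i)}\,r$ for some polynomial $r$, i.e. $p$ is divisible by $\prod_{j\neq i}\lambda_j$. We already know from the first bullet that $(\nabla p\wedge\nabla\lambda_i)\cdot\nn_i=0$ on $f_i$, so it remains to check that $(\nabla p\wedge\nabla\lambda_i)\cdot\nn_j=0$ on each face $f_j$ with $j\neq i$. On $f_j$ we have $\lambda_j=0$, and since $\lambda_j$ is a factor of $p$, by the Leibniz rule $\nabla p = \lambda_j\,\nabla(p/\lambda_j) + (p/\lambda_j)\,\nabla\lambda_j$; restricted to $f_j$ the first term vanishes, so $\nabla p|_{f_j}$ is parallel to $\nabla\lambda_j=\nn_j$. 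Therefore $(\nabla p\wedge\nabla\lambda_i)\cdot\nn_j = (\nabla p\wedge\nabla\lambda_i)\cdot\nabla\lambda_j$, and on $f_j$ this is (a scalar multiple of) $(\nabla\lambda_j\wedge\nabla\lambda_i)\cdot\nabla\lambda_j=0$, again by the triple-product property. Since $\partial\E=\bigcup_{m=1}^4 f_m$, combining the $j=i$ case with the $j\neq i$ cases shows $(\nabla p\wedge\nabla\lambda_i)\cdot\nn=0$ on all of $\partial\E$, completing the proof.

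The only subtlety worth flagging is that in the third bullet one must be careful that $p$ need only be divisible by $\lambda_j$ individually (which it is, as $\lambda_j\mid b_3^{(-i)}$ for each $j\neq i$) — the full factorization by $b_3^{(-i)}$ is not needed face-by-face, just one factor at a time — and that the normals $\nn_m=\nabla\lambda_m$ are genuinely the outward normals by the normalization chosen in the setup, so no sign or scaling issues arise in identifying $\nn_m$ with $\nabla\lambda_m$ on $f_m$.
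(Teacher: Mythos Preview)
Your proof is correct and follows essentially the same route as the paper: the first bullet via the scalar triple product with $\nn_i=\nabla\lambda_i$, the second via $\nabla p\wedge\nabla\lambda_i=\bcurl(p\,\nabla\lambda_i)$ and $\div\,\bcurl=0$, and the third by observing that on each $f_j$ with $j\neq i$ the factor $\lambda_j\mid p$ forces $\nabla p$ to be parallel to $\nn_j$, so the triple product vanishes there as well. Your write-up is slightly more detailed (the Leibniz-rule step and the alternative divergence identity), but the argument is the same.
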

\begin{proof}
The first statement follows immediately from the fact that $\nn_i=\nabla\lambda_i$ and the properties of the scalar triple product. The second follows immediately from \eqref{rotdelprod} observing that  $\nabla p\wedge \nabla \lambda_i=\bcurl(p\nabla\lambda_i)$. Finally, to see the third we remark that on each face $f_j~ (j\neq i)$
the condition ``$p=0$ on $f_j$'' implies that $\nabla p$ is directed as $\nn_j=\nabla\lambda_j$. Hence, on
each face $f_j$ (whether $j=i$ or not!) at least one between $\nabla p$ and $\nabla \lambda_i$ is directed as $\nn_j$ and the scalar triple product vanishes.
\end{proof}

As a consequence of Proposition \ref{prope1} we have that: 

\begin{itemize}
\item  for all polynomial $p$ of degree $k-3$, and for every constant vector $\cc$, we have that 
$\bcurl(\cc b_4 p)$ belongs to $\Z_k$;
\item $\forall i$ with $1\le i\le 4$ and for all polynomial $p\in \P_{k-2}$ 
we have that 
$\bcurl(\nabla\lambda_i b_3^{(-i)}p)$  belongs to  $\Z_k$.
\end{itemize}

We can then introduce some additional notation.
For $s\ge 1$ integer,we set
\begin{itemize}
\item 
$\calB_s$ (the {\it bubbles} of degree $\le s$)$:=\{p\in \P_s(\E): p\equiv 0 \mbox{ on }\partial \E\}$.\\
Note that $\calB_s\equiv\{{ 0}\}$ for $s< 4$, and for  $s\ge 4$ the dimension
of $\calB_s$ is equal to  $\pi_{s-4,3}$. 

\end{itemize}
For $1\le i\le 4$ we define
 \begin{itemize}
\item 
$\calC_s^i$   (the {\it cups} of degree $\le s$)$:= \{ b_3^{(-i)} p_{s-3}\nn_i , \;p_{s-3}\in\P_{s-3}\}$.\\
Note that an element of $\calC^i_s$ vanishes on the whole $\partial\E$ with the only possible exception  of the face $f_i$. Moreover, $\calC^i_s\equiv\{{\bf 0}\}$ for $s< 3$, and for  $s\ge 3$ the dimension of each $\calC_s^i$ is equal to  $\pi_{s-3,3}$.

\end{itemize}
Then we set  
\begin{itemize}
\item $\calC_s:=span\{\calC_s^i (i=1,...,4)\}.$ 
\end{itemize}
We have the following result.
\begin{prop}\label{ZandC}
Let $E$ be a tetrahedron. Then it holds
$$
\Z_k = \bcurl(\calC_{k+1}) \: .
$$
\end{prop}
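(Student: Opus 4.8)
The goal is to prove $\Z_k = \bcurl(\calC_{k+1})$ for a tetrahedron $\E$. The inclusion $\bcurl(\calC_{k+1}) \subseteq \Z_k$ is already essentially established by Proposition~\ref{prope1}: an arbitrary element of $\calC_{k+1}$ is a sum of terms of the form $b_3^{(-i)} p_{k-2} \nn_i = b_3^{(-i)} p_{k-2} \nabla\lambda_i$ with $p_{k-2}\in\P_{k-2}$, and the third bullet after Proposition~\ref{prope1} tells us that $\bcurl$ of such a term lies in $\Z_k$ (it is divergence-free by the second bullet of Proposition~\ref{prope1}, has vanishing normal components on $\partial\E$ by the third bullet, and is a polynomial vector field of degree $\le k$ since $b_3^{(-i)}p_{k-2}$ has degree $\le k+1$). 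So the real content is the reverse inclusion $\Z_k \subseteq \bcurl(\calC_{k+1})$.

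**Strategy for the hard inclusion.** Take $\zz \in \Z_k$, so $\zz\in(\P_k)^3$, $\div\zz=0$ in $\E$, and $\zz\cdot\nn = 0$ on $\partial\E$. Since $\div\zz = 0$ and $\zz$ is a polynomial on a simply connected domain, there is a polynomial vector potential $\bphi$ with $\bcurl\,\bphi = \zz$; using \eqref{rotxpq3} (or \eqref{decompPk3cx}) we may take $\bphi\in(\P_{k+1})^3$, and moreover we are free to modify $\bphi$ by any gradient $\nabla\psi$. The first step is to exploit this gauge freedom: because $\zz\cdot\nn_f=0$ on each face $f$, the tangential components of $\bphi$ on each face $f$ are the surface gradient of some scalar, i.e. $\bphi^{\tau_f} = \bgrad_f g_f$ on $f$ for a scalar $g_f$ defined on the face (this is the 2D Hodge/Poincaré statement on the planar face, using that $\rot_f(\bphi^{\tau_f}) = \zz\cdot\nn_f = 0$). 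The plan is then to choose the gauge $\psi$ so that $\bphi$ has vanishing tangential components on $\partial\E$ altogether, i.e. $\bphi\wedge\nn = 0$ on every face. Concretely: pick $\psi\in\P_{k+2}$ whose restriction to each face $f_i$ agrees with $-g_{f_i}$ (one must check these face data are compatible along the shared edges — on an edge $e = f_i\cap f_j$ the tangential-along-$e$ derivatives of $g_{f_i}$ and $g_{f_j}$ must match, which holds because both equal $\bphi\cdot\tt_e$), and replace $\bphi$ by $\bphi + \nabla\psi$. After this gauge change $\bphi$ has all tangential components zero on $\partial\E$, hence $\bphi^{\tau_{f_i}} = 0$ on $f_i$ for each $i$, which forces $\bphi|_{f_i}$ to be parallel to $\nn_i = \nabla\lambda_i$ on $f_i$, i.e. $\bphi\big|_{f_i} = (\bphi\cdot\nn_i)\,\nabla\lambda_i$ on $f_i$.

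**Extracting the $\calC_{k+1}$ structure.** Now decompose $\bphi$ along the barycentric directions: write $\bphi = \sum_{i=1}^4 c_i\,\nabla\lambda_i$ with scalar coefficients $c_i$ (the $\nabla\lambda_i$ span $\R^3$, with one linear relation $\sum_i |f_i|\nabla\lambda_i = 0$, but one can still solve this, choosing any convenient representative). The condition ``$\bphi$ is parallel to $\nabla\lambda_i$ on $f_i$'', i.e. that $\sum_{j\neq i} c_j \nabla\lambda_j$ is parallel to $\nabla\lambda_i$ on the face $\{\lambda_i = 0\}$, should force each $c_j$ (for $j\neq i$) to vanish on $f_i$, hence $c_j$ is divisible by $\lambda_i$; running this over all $i\neq j$ shows $c_j$ is divisible by $\prod_{i\neq j}\lambda_i = b_3^{(-j)}$. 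Thus $c_j = b_3^{(-j)} p_j$ with $p_j\in\P_{k-2}$ (degree count: $\deg\bphi\le k+1$ and $\deg\nabla\lambda_j = 0$ forces $\deg c_j \le k+1$, and $b_3^{(-j)}$ has degree $3$). Then $\bphi = \sum_j b_3^{(-j)} p_j\,\nabla\lambda_j = \sum_j b_3^{(-j)} p_j\,\nn_j \in \calC_{k+1}$ by definition of $\calC_{k+1}$, and $\zz = \bcurl\,\bphi \in \bcurl(\calC_{k+1})$, completing the proof.

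**Where the difficulty lies.** The routine parts are the divergence-free potential existence, the degree bookkeeping, and the final linear-algebra extraction of divisibility. The delicate step — and the one I would write out most carefully — is the gauge-fixing: constructing a single global polynomial $\psi\in\P_{k+2}$ whose traces on the four faces are the prescribed $-g_{f_i}$. One must verify the edge-compatibility of the face data (so that a continuous, indeed polynomial, trace exists) and then invoke that polynomial traces on the faces of a tetrahedron with matching edge values extend to a polynomial of controlled degree — both true but requiring a short argument (the edge-compatibility follows since on $e=f_i\cap f_j$ both $\partial_{\tt_e} g_{f_i}$ and $\partial_{\tt_e} g_{f_j}$ equal $\bphi\cdot\tt_e$, so $g_{f_i}-g_{f_j}$ is constant along $e$ and the constants can be absorbed). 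An alternative that sidesteps the global extension is to fix the tangential trace one face at a time, subtracting curls of elements already known to lie in $\calC_{k+1}$, but the accounting there is messier; I would prefer the clean gauge-fixing route and simply flag the trace-extension lemma as the technical heart.
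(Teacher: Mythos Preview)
Your gauge-fixing step (finding $\psi\in\P_{k+2}$ so that $\bphi+\nabla\psi$ has vanishing tangential trace on every face, after checking edge compatibility of the face potentials $g_{f_i}$) is exactly what the paper does, and you have correctly identified that the trace-extension on a tetrahedron is the technical point.

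The gap is in your ``Extracting the $\calC_{k+1}$ structure'' step. You write $\bphi=\sum_{i=1}^4 c_i\,\nabla\lambda_i$ and claim that the condition ``$\sum_{j\neq i}c_j\nabla\lambda_j$ is parallel to $\nabla\lambda_i$ on $f_i$'' forces $c_j|_{f_i}=0$ for $j\neq i$. This is false: the three vectors $\{\nabla\lambda_j:j\neq i\}$ form a basis of $\R^3$, and $\nabla\lambda_i$ itself is a nontrivial combination of them (from $\sum_j|f_j|\nabla\lambda_j=0$), so the parallelism condition only says that $(c_j|_{f_i})_{j\neq i}$ is proportional to $(|f_j|)_{j\neq i}$, not that it vanishes. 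One can try to absorb this proportionality factor into the free scalar in the (non-unique) decomposition, but that requires a second edge-compatibility verification and a second polynomial-extension argument, which you have not supplied; as written the argument does not close.

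The paper's route from ``$\bphi^{\tau_f}=0$ on every face'' to ``$\bphi\in\calC_{k+1}$'' is different and avoids this issue. Observe that on any edge $e=f_i\cap f_j$ the three vectors $\tt_e$, the in-face normal to $e$ in $f_i$, and the in-face normal to $e$ in $f_j$ are linearly independent; since $\bphi^{\tau_{f_i}}=\bphi^{\tau_{f_j}}=0$, all three components vanish and hence $\bphi|_e=0$. Therefore on each face $f_i$ the only surviving component $\bphi\cdot\nn_i$ is a polynomial of degree $k+1$ vanishing on $\partial f_i$, i.e.\ $(b_3^{(-i)}p^i_{k-2})|_{f_i}$ for some $p^i_{k-2}\in\P_{k-2}$. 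Subtracting the four cups $b_3^{(-i)}p^i_{k-2}\,\nn_i\in\calC_{k+1}^i$ leaves a polynomial vector in $(\P_{k+1})^3$ that vanishes on all of $\partial\E$, hence lies in $(\calB_{k+1})^3\subset\calC_{k+1}$. This replaces your linear-algebra extraction by a direct geometric peeling (edges $\to$ face-normals $\to$ interior bubble), and no second compatibility check is needed.
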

\begin{proof}
Since we already noted that $\bcurl(\calC_{k+1}) \subseteq \Z_k$, we only need to prove the converse. 
From \eqref{defZ3} it is easy to check that the space $\Z_k$ can be written as
\begin{equation}\label{Zk-alt}
\Z_k = \left\{ \bcurl \pp \: | \: \pp \in (\P_{k+1})^3 \mbox{ such that } \rot_f (\pp^{\tau_f}) = 0 \ \mbox{ for all } f \in \partial E
\right\} , 
\end{equation}
where we used the well known formula $ \rot_f \pp^{\tau_f}  =( \bcurl \pp_{|f}) \cdot {\bf n}_f$, valid on every face $f$ of $E$.

Therefore, in order to show that $\Z_k \subseteq \bcurl(\calC_{k+1})$, it is sufficient to prove that for any $\pp$ as in definition \eqref{Zk-alt} there exists an element $\bc_{k+1} \in \calC_{k+1}$ such that
$
\bcurl \bc_{k+1} = \bcurl\pp .
$
The above condition is surely satisfied if, given any $\pp$ as in definition \eqref{Zk-alt}, we can find $\bc_{k+1} \in \calC_{k+1}$ and $\psi \in \P_{k+2}(E)$ such that
\begin{equation}\label{L:target}
\bc_{k+1} + \bgrad \psi = \pp \quad \textrm{ in } E.
\end{equation}
We start by working on the boundary of the element. Take any face $f \in \partial E$. Since by definition $ \rot_f \pp^{\tau_f} = 0$, there exists a $q_f \in \P_{k+2}(f)$ such that
\begin{equation}\label{q-face}
\pp^{\tau_f} = \bgrad_f q_f \textrm{ on } f .
\end{equation}
Note that each $q_f$, $f \in \partial E$, is uniquely defined up to an additive constant; 
we also observe that across each edge $e$ of $\E$ (with faces $f,f'$ sharing $e$) it holds
\begin{equation}\label{eq-edge-tang}
\bgrad q_f\cdot {\bf t}_e = \pp_{|f}\cdot {\bf t}_e = \pp_{|{f'}} \cdot {\bf t}_e
= \bgrad q_{f'}\cdot {\bf t}_e .
\end{equation}
Running along the edges of each face (and taking into account that $\rot_f q_f=0$ on each face $f$), it is then easy to check that we can choose  the free additive constants  for  $q_f$ in such a way that they glue continuously across all edges. 

We can therefore define $\psi$ on each face as follows
\begin{equation}\label{face-def}
\psi_{|f} = q_f \quad \forall f \in \partial E ,
\end{equation}
and have that $\psi$ is continuous on $\partial\E$ and face-wise polynomial of degree $k+2$.

 Therefore, we can now take $\psi \in \P_{k+2}(E)$ in \eqref{L:target} as any polynomial having such a function as a trace (here is where we use the fact that $\E$ is a tetrahedron).  Note that if $k<2$ there is only one such polynomial, otherwise there are infinitely many, actually a space of dimension $\pi_{k-2,3}$.
Using \eqref{face-def}, \eqref{q-face} and recalling that the tangential components of elements in $\calC_{k+1}$ on the boundary are always vanishing, we have found that
\begin{equation}\label{eq-tang}
\forall f \in \partial\E, \
\forall \bar \bc_{k+1} \in \calC_{k+1}\qquad\Big( \bar \bc_{k+1} + \bgrad \psi \Big)^{\tau_f} = \Big( \bgrad \psi \Big)^{\tau_f} = \pp^{\tau_f},
\end{equation}
where the notation \eqref{vtauf} was used for the tangential components. Let now $e \in \partial E$ be an edge shared by two faces $f,f'$, and let ${\bf s}$ denote the unit vector co-planar with $f$ and orthogonal to $e$ (pointing outwards with respect to $f$). Similarly, let ${\bf s}'$ denote the analogous vector with respect to $f'$. It is immediate to check that $\{ {\bf t}_e, {\bf s}, {\bf s}' \}$ are linearly independent. Using \eqref{eq-edge-tang} and \eqref{eq-tang} we obtain that on the edge $e$
$$
\bgrad \psi \cdot{\bf t}_e = \pp\cdot{\bf t}_e , \quad
\bgrad\psi\cdot{\bf s} = \pp\cdot{\bf s} , \quad
\bgrad \psi\cdot{\bf s}' = \pp\cdot{\bf s}' .
$$ 
We conclude that in particular
\begin{equation}\label{edge-same}
 \bgrad \psi_{|e }=  \pp_{|e} \qquad
\forall e \in \partial\E .
\end{equation}
From \eqref{edge-same} we have that, for all $f_i$ in $\partial E$, the difference $(\bgrad \psi - \pp)_{|f_i} \cdot {\bf n} $ vanishes on the boundary of the face $f_i$, and is in $\P_{k+1}(f)$. Therefore such a function is a polynomial bubble of degree $k+1$ on the face; thus one can always find  a cup $\bc_{k+1}^i$ such that
\begin{equation*}
\big( \pp-\bgrad \psi \big)_{|f_i} \cdot {\bf n}^i = b_3^{(-i)} p_{k-2}^i =: \bc_{k+1}^i \cdot {\bf n}^i, \quad i=1,..,4 .
\end{equation*}
Note that each cup $\bc_{k+1}^i$ vanishes on all faces $f_j$ with $j \not= i$.
By taking   the function $\bar{\bc}_{k+1}=\sum_{i=1}^4 \bc_{k+1}^i$ and using \eqref{eq-tang} and \eqref{edge-same} we have  then
\begin{equation}\label{ABC}
\pp - \bgrad \psi  - \bar{\bc}_{k+1} = 0 \quad \textrm{ on } \partial E.
\end{equation}
The function on the left hand side of \eqref{ABC} is a polynomial in $(\P_{k+1})^3$ that vanishes on $\partial E$, and is therefore in the space of bubbles $(\calB_{k+1})^3$. Since $(\calB_{k+1})^3 \subset \calC_{k+1}$ we can find a $\hat{\cc}_{k+1} \in \calC_{k+1}$ such that
$$
\pp - \bgrad \psi - \bar{\bc}_{k+1} = \hat{\bc}_{k+1} \quad \textrm{ on } E.
$$
The proof is therefore concluded taking $\psi$ as above and $\bc_{k+1}=\bar{\bc}_{k+1}  + \hat{\bc}_{k+1}$.
\end{proof}

We now look into the dimension of the space $\bcurl (\calC_{k+1}) = \Z_{k}$.
We note that, for every $p\in\calB_{s}$ we have four cups $\nn_i p\in \calC_s^i~ (i=1,...,4)$, but only three of them are independent, as only three normals are independent.
Hence, in particular, it must hold
\begin{equation}
dim(\calC_s)~\le~4\,\pi_{s-3,3}-\pi_{s-4,3},
\end{equation}
that applied to $s=k+1$ gives
\begin{equation}
dim(\calC_{k+1})~\le~4\,\pi_{k-2,3}-\pi_{k-3,3}.
\end{equation}
According to what we saw in Proposition \ref{ZandC}, every
$\bcurl$ of an element of $\calC_{k+1}$ is an element of $\Z_{k}$. However, we  note that 
for every $\psi_{k-2}\in\P_{k-2}$  the gradient of $\nabla(b_4\psi_{k-2})$ belongs to $\calC_{k+1}$, and $\bcurl\nabla(b_4\,\psi_{k-2})$ is zero. Hence 
\begin{equation}\label{pippo1}
dim(\Z_k)=dim\big(\bcurl(\calC_{k+1})\big)
\le dim(\calC_{k+1})-\pi_{k-2,3} \le 3\pi_{k-2,3}-\pi_{k-3,3}.
\end{equation}
On the other hand, we easily obtain a lower bound on the dimension of $\Z_k$ by taking the dimension of $(\P_{k})^3$ and subtracting the number of constraints in \eqref{defZ3}. This is only a lower bound since in principle some of those constraints could be linearly dependent. Noting that the integral of the divergence must be equal to zero for any function with vanishing normal component on the boundary, one obtains
\begin{equation}\label{otherbound}
dim(\Z_k) \ge 3 \pi_{k,3} - 4 \pi_{k,2} - \pi_{k-1,3} + 1 = 3\pi_{k-2,3}-\pi_{k-3,3} ,
\end{equation}
where the last identity is trivial to check.
Combining bounds \eqref{pippo1} and \eqref{otherbound} we obtain that
$$
dim(\Z_k) = 3\pi_{k-2,3}-\pi_{k-3,3} .
$$

Other ad-hoc arguments could be applied for specific geometries. For instance it is almost immediate to check that on the unit cube $]-1,1[^3$ we have $\Z_1=\Z_2=\{\bf 0\}$ and
setting $b_6:=(x^2-1)(y^2-1)(z^2-1)$
$$\Z_3=\bcurl\left(span\Big\{(\frac{b_6}{x^2-1},0,0), (0,\frac{b_6}{y^2-1},0), (0,0,\frac{b_6}{z^2-1})\Big\}\right).$$

\subsection{The lazy choice and the stingy choice} As we have seen,  it is far from easy to design general properties that allow, for each single polyhedron, a simple and systematic strategy to spot the elements of $\Z_k$, and use them to chose the $\S_k$-preserving degrees of freedom. 
The ``simple'' available choices are essentially the lazy choice, and the systematic strategy
of subsection \ref{system} (with various prices depending on the type of slicing that we choose).

 In particular here the lazy choice would correspond to treat every polyhedron as if it was a tetrahedron, by picking, in an almost arbitrary way, four different planes that contain one or more faces each, and then construct the cups and the bubbles relative to the tetrahedron $T$ made by the four chosen planes. Clearly the number of these cups  and bubbles will depend on the desired accuracy $k$. Out of them we can then construct the elements 
of $\Z_k(T)$. To construct a suitable set of $\S_k$-preserving degrees of freedom we will keep all the boundary degrees of freedom \eqref{f3d_dof1} and all the ``divergence'' degrees of freedom \eqref{f3d_dof2kd}, then rearrange the other ones, inserting suitable ones based on $\Z_k(T)$: typically, integrals, over $T$, against all the elements of $\Z_k(T)$. Clearly, on a polyhedron with many faces, the true space $\Z_k$ will be much smaller, and our lazy choice will force us to use many more degrees of freedom than needed. 

Moreover the lazy choice, unfortunately, will not be available  when $\E$ is a parallelepiped (with three pairs of parallel planes). This happens since we cannot find four faces with four normals all different from each other (as needed to build a tetrahedron). On the other hand, the systematic strategy described in Subsection \ref{system} is {\it always} a way-out, although it might require a heavy additional work
on each element (that in our opinion would be worth the effort only in very special cases, and in particular if one plans to use the same mesh for many different computations).

\begin{remark}\label{BconS-f3}
It is easy to see that, similarly to what has been done for face 2d elements (and extended to edge 2d elements), here too we can easily construct a B-compatible interpolation operator, that will work both for the original face 3d spaces and for their Serendipity variant. See again \eqref{Bcompatf2} and Remark \ref{BconS}.\qed
\end{remark}


\

\section{Edge elements in 3d}\label{edge3d}

The definition of edge elements in three dimensions is more complex than the above, and requires suitable VEM spaces on the faces, and suitable VEM spaces inside. 

\subsection{The boundary}

At a (very) general level, for every triplet ${\boldsymbol\beta}=(\beta,\beta_d,\beta_r)$ and for every face $f$ we set
\begin{equation}
\bV^e_{\boldsymbol \beta}(f):=\bV^e_{\beta,\beta_d,\beta_r}(f)
\end{equation}
and we define
\begin{multline}\label{Be3d}
{\bold B}_{{\boldsymbol\beta}}(\partial\E):=\!\{\vv|\vtauf\!\!\in\!\!\bV_{\boldsymbol \beta}^e(f)
\forall \mbox{ face }f\!
\mbox{ and }\! \vv\cdot\tt_e \!\mbox{ continuous $\!\forall$  edge $e$  of $\partial\E$}
\}.
\end{multline}
%
\subsection{The curl}
For every triplet ${\boldsymbol \mu}=(\mu,\mu_d,\mu_r)$ we set
\begin{equation}
\bV^f_{\boldsymbol \mu}(\E):=\bV^f_{\mu,\mu_d,\mu_r}(\E).
\end{equation}
\subsection{The space}
We are ready: for indexes ${\boldsymbol\beta},\, k_d,\,{\boldsymbol\mu}$ with
$\beta_r=\mu$ we define
\begin{equation}\label{VEMe3d}
\bV^e_{{\boldsymbol\beta},k_d,{\boldsymbol\mu}}(\E):=\{\vv|\,\mbox{s. \!t. } \vv_{|\partial\E}\in {\bold B}_{{\boldsymbol\beta}}(\partial\E); \div\vv\in\P_{k_d}(\E),\,\bcurl\,\vv\in\bV^f_{\boldsymbol\mu}\}.
\end{equation}

Note that the equality $\beta_r=\mu$ {\bf must} be required because, on every face $f$, we have that $\rot_f\vtauf$ (that belongs to $\P_{\beta_r}(f)$) coincides with $(\bcurl\,\vv)\cdot\nn_f$ (that belongs
to $\P_{\mu}(f)$), that is
\begin{equation}\label{Stokes}
\rot_f\vtauf\equiv(\bcurl\,\vv)\cdot\nn_f\equiv\ww\cdot\nn_f.
\end{equation}
This  can be easily seen by considering a face $f$ with equation $x_3=0$ where $\bcurl\,\vv\cdot\nn_f$ (the third component of $\curl\,\vv$) is given by $v_{2,x}-v_{1,y}\equiv\rot_f\vtauf$.
Moreover, since the divergence of any $\bcurl \,\vv \in \bV^f_{\boldsymbol \mu}(\E)$ vanishes, one can directly take $\mu_d=-1$ in the definition of ${\boldsymbol\mu}$. As a consequence of the above observations, we always have $\mu=\beta_r$ and $\mu_d=-1$. 
Therefore the space $\bV^e_{{\boldsymbol\beta},k_d,{\boldsymbol\mu}}(\E)$ in \eqref{VEMe3d} is determined by five (and not seven) parameters.

As far as the degrees of freedom are concerned, we need, at the boundary:
\begin{align}
\label{e3d_dofb1}
&\bullet \int_e\vv\cdot\tt_e\,q_{\beta}\de\quad\mbox{ for all $q_{\beta}\in\P_{\beta}(e)$, for all edge $e$},\\
&\bullet \mbox{ for  $\beta_d\ge 0$:}\quad 
\label{e3d_dofb2}
 \int_{f}\vv\cdot\xx \,q_{\beta_d}\df \quad\mbox{ for all $q_{\beta_d}\in\P_{\beta_d}(f)$ for all face $f$},\\
&\bullet \mbox{ for  $\beta_r\ge 1$: }\quad
\label{e3d_dofb3}
\int_{f}\vv\cdot\brot q_{\beta_r}\df \quad\mbox{ for all $q_{\beta_r}\in\P_{\beta_r}(f)$ for all face $f$.}
\end{align} 
As we observed in the two-dimensional case (see \eqref{prokr2d}) we see that out of the above degrees of freedom we will be able to compute,  for each $\vv\in
{\bold B}_{{\boldsymbol\beta}}(\partial\E)$:
\begin{equation}\label{e3d_dofb3}
\int_{f}\vtauf\cdot \qq_{s}\df \quad\forall\mbox{  face $f$ and 
$\forall~\qq_{s}\in(\P_{s}(f))^2$, for $s\le\beta_d+1$}.
\end{equation}
As far as $\ww:=\bcurl\,\vv$ is concerned, we should use \eqref{f3d_dof1}-\eqref{f3d_dof2kr}. We note however that, always for $\mu=\beta_r$ and using \eqref{Stokes}, the d.o.f.  \eqref{f3d_dof1} are already determined by the values of $\rot_f\vtauf$ on each face, that in turn can
be computed using \eqref{e3d_dofb3} and \eqref{e3d_dofb1}. Similarly, the d.o.f \eqref{f3d_dof2kd} (after integration by parts) are equal to $\int_{\partial\E}\ww\cdot\nn \,q_{\mu_d}$, since obviously $\div\ww=0$. Hence, the only information that is needed, in addition 
to \eqref{e3d_dofb1}-\eqref{e3d_dofb3} is: 
\begin{equation}
 \bullet \mbox{ for $\mu_r\ge 0$}:\quad
\label{e3d_dofcu3}
\int_{\E}\ww\cdot\xx\wedge\,\qq_{\mu_r}\dE \quad\mbox{ for all $\qq_{\mu_r}\in (\P_{\mu_r}(\E))^3$}.
\end{equation}
Following the previous discussion (see formula \eqref{prokr3d}) we see that out of the above degrees of freedom we will be able to compute,  for each $\vv\in
 \bV^e_{{\boldsymbol\beta},k_d,{\boldsymbol\mu}}(\E)$:
\begin{equation}\label{procurl}
\int_{\E}(\bcurl\vv)\cdot \qq_{s}\dE \quad\forall~\qq_{s}\in(\P_{s}(\E))^3, \mbox{for $s\le\mu_r+1$}.
\end{equation}

After we took care of $\ww\equiv\bcurl\,\vv$ we must (finally) require
\begin{equation}\label{e3d_dofd1}
\bullet \mbox{ for  $k_d\ge 0$:}\quad\int_{\E}\vv\cdot\xx\,q_{k_d}\dE \quad\mbox{ for all $q_{k_d}\in\P_{k_d}(\E)$}.
\end{equation}
\begin{remark}
If we need to compute the projection of an element $\vv\in \bV^e_{{\boldsymbol\beta},k_d,{\boldsymbol\mu}}(\E)$ onto the space $(\P_s(\E))^3$ 
we can use the decomposition \eqref{decompPk3cx} as $$\pp_s=\bcurl\qq_{s+1}
+\xx r_{s-1}$$ and, integrating the first term by parts, write
\begin{multline}\label{proe3d}
\int_{\E}\vv\cdot\pp_s=\int_{\E}\vv\cdot\bcurl\qq_{s+1}+\int_{\E}\vv\cdot\xx r_{s-1}\\
=\int_{\E}\bcurl\vv\cdot\qq_{s+1}+\int_{\partial\E}\vtauf\cdot\qq_{s+1}\wedge\nn
+\int_{\E}\vv\cdot\xx r_{s-1}.
\end{multline}
In the last line, the first term, as in \eqref{procurl}, can be computed for $s+1\le\mu_r+1$
(meaning obviously $s\le\mu_r$); the second term, as in \eqref{e3d_dofb3}, can be computed for $s+1\le \beta_d+1$ (meaning, here too, $s\le\beta_d$) and finally the
last term, following \eqref{e3d_dofd1}, can be computed for $s-1\le k_d$, meaning
$s\le k_d+1$. Summarizing: the projection of an element $\vv\in\bV^e_{{\boldsymbol\beta},k_d,{\boldsymbol\mu}}(\E)$ onto the space $(\P_s(\E))^3$ can be computed for
$$ s\le \min\{\mu_r,\beta_d,k_d+1\}.$$
\end{remark}

\begin{remark} \label{altBpres} In a case like the present one (in which the space 
$\bcurl(\bV_{{\boldsymbol\beta},k_d,{\boldsymbol\mu}}^e(\E))$ 
is {\bf not} a polynomial space),
the {``B-compatibility property''} (see e.g.  \eqref{Bcompatf2}) would be better defined, for an interpolation
operator $\Pi$ from $(C^1(\E))^3 $ to $\bV_{{\boldsymbol\beta},k_d,{\boldsymbol\mu}}^e(\E)$, as
\begin{equation}\label{Bcompate3}
\forall \uu\in (C^1(\E))^3 \mbox{ with }\bcurl\,\uu\in \bV^f_{\boldsymbol\mu}(E)\mbox{ we have } \bcurl(\Pi\uu-\uu)=0.
\end{equation}
With that, we easily see that the natural interpolation operator associated with the degrees of freedom \eqref{e3d_dofb1}-
\eqref{e3d_dofb3}, \eqref{e3d_dofcu3}, and \eqref{e3d_dofd1} is $\bcurl$-preserving.\qed
\end{remark}







All this is, dealing with spaces with seven indexes, is {\bf very} general, and {\bf very} confusing. We shall therefore look at some particular case. 

\subsection{A particular case: {\it N2}-type VEMs}
We set ${\boldsymbol \beta}=(k,k-1,k-1)$,  ${\boldsymbol \mu}=(k-1,-1,k-2)$, and $k_d=k-1$. Then we have for each face, the {\it N2}-like VEM space:
\begin{multline}\label{8.12}
\bV^e_{\boldsymbol \beta}(f):=\bV^e_{k,k-1,k-1}(f)\equiv\{\vv|\,\mbox{such that }\\ \vv\cdot\tt_e\in\P_k(e) \,\forall \mbox{ edge } e,\; \div_f\vv\in\P_{k-1},\,\rot_f\vv\in\P_{k-1}\}.
\end{multline}
Note that, for a triangular face, we will have the space $(\P_k(f))^2$.
{ The space ${\mathcal B}_{{\boldsymbol\beta}}(\partial\E)$ will then be} made of vector valued functions that on each edge have a tangential component of degree $\le k$, and whose tangential part on each face has a
 2d divergence and a  2d rotational polynomials of degree $\le k-1$. Moreover the tangential components on edges
are continuous (= single valued) when passing from a face to a neighboring one.

As degrees of freedom in ${\mathcal B}_{{\boldsymbol\beta}}(\partial\E)$ we have
\begin{align}
\label{e30_dof1}
&\bullet\int_e\vv\cdot\tt_e\,q_k\ds
\mbox{ on each edge $e$, for each $q_k\in\P_k(e)$},\\
\label{e30_dof2d}
&\bullet\int_f\vtauf\cdot\xx^{\tau_f}\,q_{k-1}\df \mbox{ on each face $f$, for each $q_{k-1}\in \P_{k-1}(f)$},\\
\label{e30_dof2r}
&\bullet\int_f\vtauf\cdot\brot_2 q_{k-1}\df \mbox{ on each face $f$, for each $q_{k-1}\in \P_{k-1}(f)$}.
\end{align}
 As additional degrees of freedom  for $\ww\equiv\bcurl\,\vv$ in 
$\bV^f_{\boldsymbol \mu}(\E)\equiv \bV^f_{k-1,-1,k-2}(\E)$ we have, according to \eqref{e3d_dofcu3}
\begin{equation}\label{e3d_dofcu3-2}
\hskip-1.8truecm\bullet\int_{\E}\ww\cdot\xx\wedge\,\qq_{k-1}\dE \quad\mbox{ for all $\qq_{k-1}\in (\P_{k-1}(\E))^3$}.
\end{equation}
Finally we will need the degrees of freedom \eqref{e3d_dofd1} that now become
\begin{equation}\label{e3d_dofdk4}
\hskip-2.9truecm\bullet\int_{\E}\vv\cdot\xx\,q_{k-1}\dE \quad\mbox{ for all $q_{k-1}\in\P_{k-1}(\E)$}.
\end{equation}
One can see that this could be interpreted as  a  generalization to polyhedrons of
the N\'ed\'elec second-kind elements.

We point out that
the space defined in \eqref{8.12} is exactly the same three-dimensional edge space introduced in \cite{super-misti}, while the degrees of freedom are different. 

\subsection{Another particular case: {\it N1}-type spaces}
The Virtual Elements of the previous subsection were of the $BDM$ or $N2$ type. Let us see here those of $RT$ or $N1$ type.

We set ${\boldsymbol \beta}=(k,k-1,k)$,  ${\boldsymbol \mu}=(k,-1,k-1)$, and $k_d=k-1$. Then we have for each face:
\begin{multline}
\bV^e_{\boldsymbol \beta}(f):=\bV^e_{k,k-1,k}(f)\equiv\{\vv|\,\mbox{ such that }\\ \vv\cdot\tt_e\in\P_k(e) \,\forall \mbox{ edge } e,\; \div_f\vv\in\P_{k-1},\,\rot_f\vv\in\P_{k}\}.
\end{multline}
 The space ${\mathcal B}_{{\boldsymbol\beta}}(\partial\E)$ will be made of vector valued functions that on each edge have a tangential component of degree $\le k$, and whose tangential part on each face has a
 2d divergence of degree $k-1$ and a  2d rotational of degree $\le k$. Moreover the tangential components on edges
are continuous (= single valued) when passing from one face to a neighboring one.

As degrees of freedom in ${\mathcal B}_{{\boldsymbol\beta}}(\partial\E)$ we have
\begin{align}
\label{e30_dof1RT}
&\bullet\int_e\vv\cdot\tt_e\,q_k\ds
\mbox{ on each edge $e$, for each $q_k\in\P_k(e)$},\\
\label{e30_dof2dRT}
&\bullet\int_f\vtauf\cdot\xx^{\tau_f}\,q_{k-1}\df \mbox{ on each face $f$, for each $q_{k-1}\in \P_{k-1}(f)$},\\
\label{e30_dof2rRT}
&\bullet\int_f\vtauf\cdot\brot_2 q_{k}\df \mbox{ on each face $f$, for each $q_{k}\in \P_{k}(f)$}.
\end{align}
 As additional degrees of freedom  for $\ww\equiv\bcurl\,\vv$ in 
$\bV^f_{\boldsymbol \mu}(\E)\equiv \bV^f_{k-1,-1,k-1}(\E)$ we have, according to \eqref{e3d_dofcu3}
\begin{equation}\label{e3d_dofcu3RT}
\hskip-2.8truecm\bullet\int_{\E}\ww\cdot\xx\wedge\,\qq_{k}\dE \quad\mbox{ for all $\qq_{k}\in (\P_{k}(\E))^3$}.
\end{equation}
Finally we will need the degrees of freedom \eqref{e3d_dofd1} that now become
\begin{equation}\label{e3d_dofdk4RT}
\hskip-2.9truecm\bullet\int_{\E}\vv\cdot\xx\,q_{k-1}\dE \quad\mbox{ for all $q_{k-1}\in\P_{k-1}(\E)$}.
\end{equation}
One can see that this
could be interpreted as  a  generalization to polyhedrons of
the {\it N1} elements.

\subsection{Unisolvence of the degrees of freedom}\label{unise3d}
For the sake of simplicity, we will discuss the unisolvence of our degrees of freedom for 3d edge Virtual Elements of the type {\it N2}. The extension to the general case would be conceptually trivial and only the notation would be heavier.

Assume therefore that, for a particular $\vv$ in our space,  all the degrees of freedom \eqref{e30_dof1}-\eqref{e3d_dofdk4} are zero. Using the degrees of freedom \eqref{e30_dof1}-\eqref{e30_dof2r}  (which are on each face the analogues of \eqref{e2d_d0f4}-\eqref{e2d_d0f6}) we easily see that 
on each face $f$ the tangential component $\vtauf$ is identically zero, thanks to Proposition \ref{unidofe2d}.
Hence, the normal component
of $\ww=\bcurl\,\vv$ will also be zero on each face, that is, the d.o.f. \eqref{f3d_dof1} for $\ww$ are zero.  We also have, integrating by parts and using $\div\ww=0$,  
\begin{equation}\label{uni3d2}
{\color{white}.}\hskip-0.5truecm  \int_{\E}\ww\cdot\bgrad q_{k-1}\dE =\!\sum_{f}\int_{f}\ww\cdot\nn\, q_{k-1}\df = 0\mbox{ for all } q_{k-1}\!\in\!\P_{k-1}(\E),
\end{equation}
so that the d.o.f. \eqref{f3d_dof2kd} for $\ww$ are also zero. Finally, since the d.o.f.
\eqref{e3d_dofcu3-2} are equal to zero, we have that \eqref{f3d_dof2kr} for $\ww$ are also zero. The unisolvence of the degrees of freedom \eqref{f3d_dof1}-\eqref{f3d_dof2kr} for face elements implies then $\ww\equiv\bcurl\,\vv=0$. Therefore,  $\vv=\bgrad\vphi$ for some $\vphi\in H^1(\E)$,
and since $\vtauf=0$ on the boundary we can take  $\vphi\in H^1_0(\E)$. 
As $\div\vv\in\P_{k-1}$ we have that $\Delta\vphi\in\P_{k-1}$. Since
we assumed that the degrees of freedom \eqref{e3d_dofdk4} are also zero, and recalling \eqref{divxq3} we deduce that 
\begin{equation}
\int_{\E}\vphi\, p_{k-1}\dE=0\quad\forall p_{k-1}\in \P_{k-1} .
\end{equation}
Thus,
\begin{equation}
\int_{\E} |\nabla \vphi|^2 \dE=-\int_{\E}\vphi\,\Delta \vphi  \dE  =0,
\end{equation}
so that
$\vphi=0$ and hence $\vv=0$.

With minor modifications, the above proof can be adapted to the general case given in \eqref{VEMe3d}.



\subsection{Serendipity Edge Virtual Elements in 3d}\label{sere-face-2D}

Following the same path of the previous sections, we could now construct the serendipity variants of
our 3d edge VEMs.

 We remark however, from the very beginning, that (contrary to what happened for {\it face} 3d elements), here on each face we have a  2d VEM space (and not just a polynomial as we had in in \eqref{VEMf3d}). We also point out that there is a big difference, for three-dimensional elements,  between the degrees of freedom {\it internal to the element} (that could be eliminated by static condensation) and the degrees of freedom
 {\it internal to faces}, where static condensation cannot be applied).  
 
 We also point out  that in general the number of faces is quite big: for instance, on a regular mesh of 
 $n\times n\times n$ cubes we have $n^3$ cubes and, asymptotically, $3n^3$ faces (precisely $3n^3+3n^2$, including the boundary ones).

Hence it would  be very convenient, whenever possible, to use, on faces,  the 2d serendipity spaces (introduced in Subsection {\ref{sere-edge-2D}})  instead of the original ones from Subsection \ref{edge2d-1}. In order to describe the Serendipity reduction for the present three-dimensional edge elements, we could choose for simplicity one of the two classical cases ($N1$-like VEMs or $N2$-like  VEMs), or else remain in the more general context of the space \eqref{VEMe3d}. Following what we did in Subsection
\ref{VEMe3d} we take the simplest case of $N2$-like VEMs, in the hope that once this case is clear the more complex ones could be reconstructed without major efforts.

Hence, we start by changing \eqref{Be3d} into 
\begin{multline}\label{Be3dS}
{\mathcal B}_{{k}}^S(\partial\E):=\{\vv| \mbox{ such that }\vtauf\in \bV_{\S_k}^e(f)\,
\mbox{ for all face } f \mbox{ of $\partial\E$ }\\
\mbox{ and } \vv\cdot\tt_e \mbox{ continuous along the edges $e$ of
$\partial\E$}\},
\end{multline}
where with our choice the space $\S_k$ to be preserved, on each face $f$, is  $N2(f)$, and  $\bV_{\S_k}^e(f)$ is the corresponding 2d serendipity edge space constructed in Sect. \ref{edge2d} (that is, precisely, \eqref{defVSe2d} with $\beta_r$ equal to  $k-1$). 

 If, for the sake of simplicity, every face $f$ of $\E$ is a convex polygon, we can now apply the general strategy of Sect. \ref{seregen} to each face. Thus we have:  
 \begin{prop} Assume that every face $f$ of $\E$ is a convex polygon, and let $\eta_{f}$ be defined as in \eqref{defeta}. Then in ${\mathcal B}_{{k}}^S(\partial\E)$ we can use the degrees of 
freedom 
\begin{align}
\label{f2d_SERE1ef}
&\bullet\int_e\vv\cdot\tt_e\,q_k\de,\quad\mbox{ $ \forall\,q_{k}\in\P_k(e)$}, \; \forall \mbox{ edge $e$},\\
\label{f2d_SERE2ef}
&\bullet\mbox{ for }k\ge 2: \int_{f}\vv\cdot\brot_{f} \,q_{k-1}\df, \quad \forall q_{k-1}\in\P_{k-1}(f),\,\forall\mbox{  face $f$},
\end{align}
plus, whenever $s:=k+1-\eta_f$ is non-negative,
\begin{equation}\label{f2d_SERE3ef}
\hskip-1.4truecm\bullet\;\int_{f}\vv\cdot\xx\,q_{s}\df, \quad\forall q_{s}\in\P_{s}(f) \,\forall\mbox{  face $f$} .
\end{equation}
\end{prop}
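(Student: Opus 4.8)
The plan is to reduce the statement on $\partial\E$ to a collection of independent two-dimensional problems, one per face, and invoke the machinery already built for the edge-$2d$ Serendipity spaces.

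\textbf{Step 1: Face-by-face decoupling.} First I would observe that a function $\vv\in{\mathcal B}_k^S(\partial\E)$ is, by definition \eqref{Be3dS}, determined by its tangential parts $\vtauf\in\bV_{\S_k}^e(f)$ on the individual faces, subject to the single gluing constraint that the edge tangential components $\vv\cdot\tt_e$ be single-valued across each shared edge $e$. The degrees of freedom \eqref{f2d_SERE1ef} (the edge moments) live on the $1$-skeleton and are common to the two faces sharing an edge; the degrees of freedom \eqref{f2d_SERE2ef} and \eqref{f2d_SERE3ef} are interior to a single face $f$. So a dimension count is the first thing to check: the total number of proposed d.o.f. must equal $\dim{\mathcal B}_k^S(\partial\E)$. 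Since $\bV_{\S_k}^e(f)$ is by construction (via \eqref{defVSe2d} with $\beta_r=k-1$) unisolvent with respect to precisely the edge d.o.f.\ \eqref{e30_dof1} on $\partial f$, the face d.o.f.\ \eqref{f2d_SERE2ef}, and the reduced interior d.o.f.\ \eqref{f2d_SERE3ef} (these being exactly \eqref{f2d_SERE3-add} with $k+1-\eta_f$ in place of $k+1-\eta_\E$), summing over faces and accounting for each edge once gives the right count after the gluing is imposed.

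\textbf{Step 2: Unisolvence.} With the count in hand, it suffices to prove that if all the d.o.f.\ \eqref{f2d_SERE1ef}--\eqref{f2d_SERE3ef} vanish then $\vv\equiv 0$ on $\partial\E$. Fix a face $f$. The d.o.f.\ \eqref{f2d_SERE1ef} restricted to the edges of $f$, together with \eqref{f2d_SERE2ef} on $f$ and \eqref{f2d_SERE3ef} on $f$, constitute exactly the defining unisolvent set for the $2d$ Serendipity edge space $\bV_{\S_k}^e(f)$ — this is where I would appeal directly to the edge-$2d$ theory of Subsection \ref{sere-edge-2D}, which itself rests on Proposition \ref{Z}, the characterization \eqref{defZZke}, and the analogue of Lemma \ref{addoff2d} obtained by the rotation $\nn\leftrightarrow\tt$, $\div\leftrightarrow\rot$. (The convexity of $f$ is used precisely here, to guarantee $b_{\eta_f}$ does not change sign, exactly as in the proof of Lemma \ref{addoff2d}.) Hence $\vtauf\equiv 0$ on $f$. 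Since $f$ was arbitrary, $\vv_{|\partial\E}=0$.

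\textbf{Step 3: Assembling / equivalence of d.o.f.} It remains to argue that the listed quantities are genuine degrees of freedom for ${\mathcal B}_k^S(\partial\E)$, i.e.\ that prescribing them arbitrarily (consistently across edges) produces a well-defined element. This follows from Step 2 (injectivity) plus the dimension count of Step 1: an injective linear map between spaces of equal finite dimension is bijective. One subtlety I would flag explicitly: the edge d.o.f.\ \eqref{f2d_SERE1ef} are shared, so when one restricts to a single face the value of $\vv\cdot\tt_e$ on $e\subset\partial f$ is already pinned down; the continuity requirement in \eqref{Be3dS} is exactly what makes this consistent, and it is also what prevents double-counting in the dimension argument. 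I expect the main obstacle to be bookkeeping rather than mathematics: carefully matching the reduced interior count $\pi_{s,2}$ with $s=k+1-\eta_f$ against the dimension drop in $\bV_{\S_k}^e(f)$ relative to the unreduced space, face by face, and making sure the edge contributions are counted once. Everything substantive has already been proved in the $2d$ edge case and transported here verbatim face by face.

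\begin{proof}
The statement is proved face by face, reducing everything to the two-dimensional edge Serendipity theory of Subsection \ref{sere-edge-2D}.

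By definition \eqref{Be3dS}, an element $\vv\in{\mathcal B}_k^S(\partial\E)$ is prescribed by the data $\{\vtauf\}_{f}$ with $\vtauf\in\bV_{\S_k}^e(f)$, subject to the compatibility that $\vv\cdot\tt_e$ is single-valued on each edge $e$ shared by two faces. For a fixed convex face $f$, recall from \eqref{defVSe2d} (with $\beta_r=k-1$, so $\S_k=N2(f)$) that the space $\bV_{\S_k}^e(f)$ is unisolvent with respect to precisely the following set of functionals: the edge moments $\int_e\vv\cdot\tt_e\,q_k\,\de$ for $q_k\in\P_k(e)$ on every edge $e\subset\partial f$; the face moments $\int_f\vv\cdot\brot_f q_{k-1}\,\df$ for $q_{k-1}\in\P_{k-1}(f)$ (present when $k\ge 2$); and, whenever $s:=k+1-\eta_f\ge 0$, the additional interior moments $\int_f\vv\cdot\xx\,q_s\,\df$ for $q_s\in\P_s(f)$, in the sense of \eqref{f2d_SERE3-add}. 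These are exactly the restrictions to $f$ of \eqref{f2d_SERE1ef}, \eqref{f2d_SERE2ef} and \eqref{f2d_SERE3ef}.

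\emph{Injectivity.} Suppose $\vv\in{\mathcal B}_k^S(\partial\E)$ annihilates all the functionals \eqref{f2d_SERE1ef}--\eqref{f2d_SERE3ef}. Fix a face $f$. The functionals just described, evaluated on $\vtauf\in\bV_{\S_k}^e(f)$, all vanish (the edge ones because $\vv\cdot\tt_e=\vtauf\cdot\tt_e$ on $e\subset\partial f$). By the unisolvence of $\bV_{\S_k}^e(f)$ recalled above — which for a convex face rests on Proposition \ref{Z}, the characterization \eqref{defZZke}, and the $\rot$-version of Lemma \ref{addoff2d}, obtained by the rotation exchanging $\nn\leftrightarrow\tt$, $\div\leftrightarrow\rot$; convexity of $f$ guarantees that $b_{\eta_f}$ does not change sign, exactly as in \eqref{f2d_d0f6eq01} — we conclude $\vtauf\equiv 0$ on $f$. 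Since $f$ was arbitrary, $\vv_{|\partial\E}=0$.

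\emph{Dimension count.} By the unisolvence of $\bV_{\S_k}^e(f)$, its dimension equals the number of the above functionals attached to $f$: those on $\partial f$ plus those interior to $f$. Summing $\dim\bV_{\S_k}^e(f)$ over all faces $f$ and subtracting, for each interior edge $e$ shared by two faces, the duplicated contribution $\dim\P_k(e)$ (this duplication being exactly removed by the continuity constraint in \eqref{Be3dS}), yields on one hand $\dim{\mathcal B}_k^S(\partial\E)$, and on the other hand precisely the total number of distinct functionals listed in \eqref{f2d_SERE1ef}--\eqref{f2d_SERE3ef}: one copy of each edge moment \eqref{f2d_SERE1ef}, and the face moments \eqref{f2d_SERE2ef}--\eqref{f2d_SERE3ef} summed over faces.

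\emph{Conclusion.} The functionals \eqref{f2d_SERE1ef}--\eqref{f2d_SERE3ef} thus form an injective linear map from ${\mathcal B}_k^S(\partial\E)$ to a Euclidean space of the same dimension, hence an isomorphism. Therefore these functionals are a set of degrees of freedom for ${\mathcal B}_k^S(\partial\E)$.
\end{proof}
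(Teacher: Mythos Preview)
Your argument is correct and follows exactly the approach the paper indicates: the paper itself does not give a proof of this proposition, but states just before it that ``we can now apply the general strategy of Sect.~\ref{seregen} to each face,'' and leaves the details implicit. You have supplied precisely those details --- the face-by-face reduction to the two-dimensional edge Serendipity theory of Subsection~\ref{sere-edge-2D}, the injectivity via unisolvence of $\bV^e_{\S_k}(f)$ on each convex face, and the dimension count with the edge-gluing constraint --- so your proof is a faithful expansion of what the paper intends. One cosmetic remark: on the closed surface $\partial\E$ every edge is shared by exactly two faces, so the qualifier ``interior edge'' in your dimension count is redundant, but the argument is unaffected.
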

Then, as a starting space, we use
\begin{equation}\label{VEMe3dS}
\Y^e_{k}(\E)\:=\!\{\vv|\,\mbox{s.t. } \vv_{|\partial\E}\!\in\! {\mathcal B}_{{k}}^S(\partial\E); \div\vv\!\in\!\P_{k-1}(\E),\,\bcurl\,\vv\!\in\!\bV^f_{k,-1,k-1}\}.
\end{equation}

Starting from $\Y^e_{k}(\E)$, and following our choice of $N2$-like VEMs,  we  now choose the polynomial space (that we still denote by $\S_k$) that we want to preserve,
as $ N2_k(\E) $ (that is, $(\P_k(\E))^3$).  Then, following the track of the previous cases, we   start our ``Serendipity reduction''  by choosing a suitable set of  degrees of freedom that we wnt to keep. In particular,  (as in Subsection \ref{keepdof}), we will choose the boundary ones
\eqref{f2d_SERE1ef}-\eqref{f2d_SERE3ef} (that are the boundary-serendipity substitutes of the \eqref{e3d_dofb1}, \eqref{e3d_dofb3}) to provide
$H(\bcurl)$-conformity,  together with the ones in \eqref{e3d_dofcu3RT} to ensure B-compatibility (where, this time,  $B$ is the $\bcurl$ operator). In case these are not
$\S_k$-{ identifying}, we will have to choose some additional ones among the 
\eqref{e3d_dofd1}.

\subsection{{\bf Boundary preserving}, $\bcurl$-preserving, and $\S_k$-{\bf identifying} degrees of freedom}
In order to decide which degrees of freedom to choose, we must start considering the vector-valued polynomials $\pp$, of degree $\le k$, that have the
degrees of freedom \eqref{f2d_SERE1ef}-\eqref{f2d_SERE3ef} and \eqref{e3d_dofcu3RT}
equal to zero.  We define therefore
\begin{equation}
\Z_k:=\{\pp\in (\P_k(\E))^3\mbox{ s.t. }\pp^{\tau_f}=0
\mbox{ on }\partial\E\mbox{ and }\bcurl\,\pp=0\}.
\end{equation}
It is almost immediate to see that all the elements $\pp$ of $\Z_k$ must be gradients (since their $\bcurl$ is equal to zero and $\E$ is simply connected). Hence $\pp=\bgrad\vphi$ for some $\vphi\in\P_{k+1}$. Considering the boundary conditions
we see that we can take $\vphi\in H^1_0(\E)$. In other words, $\vphi$ will be a {\it scalar bubble of degree $k+1$}. Recalling the results of \cite{SERE-Nod} we can define now
$\eta_{\E}$ as {\it the minimum number of different planes necessary to cover
$\partial\E$}, and deduce that if $\eta_{\E}>k+1$ then $\Z_k$ is reduced to $\{{\bf 0}\}$, and the degrees of freedom 
\eqref{f2d_SERE1ef}-\eqref{f2d_SERE3ef} and \eqref{e3d_dofcu3RT}
 will already be able to identify
all the elements of $\S_k$ in a unique way; this would mean that we can take $\SN=\MM$
in \eqref{Sident}. Otherwise, for $\eta_{\E}\le k+1$, we will have that the dimension of $\Z_k$ is equal to $\pi_{k+1-\eta,3}$, and we need an $\SN$ such that
$\SN-\MM\ge \pi_{k+1-\eta,3}$. As in the two-dimensional cases (and also for nodal Serendipity
VEMs) we have now that, for a convex $\E$ we could take as additional degrees of freedom

\begin{equation}\label{e3d_dofdk4S}
\int_{\E}\vv\cdot\xx\,q\dE \quad\mbox{ for all $q\in\P_{k+1-\eta}(\E)$}. 
\end{equation}
 Othewise, in the non-convex case, the easiest way out would probably be to follow the systematic path  of Subsection
 \ref{system} and start by checking whether the $\calD$ matrix corresponding to the above choice \eqref{e3d_dofdk4S} has maximum rank or not. If you are not particularly unlucky, it will, and you can behave
 as in the convex case. Otherwise, you could add (say, one by one) the degrees of freedom
 of type \eqref{e3d_dofdk4S} corresponding to a $q$ homogeneous polynomial of degree
 $k+2-\eta$ (and if all of them fail, you pass to the homogeneous degree $k+3-\eta$, and so
 on). Or else, you pick a {\it lazy choice} and use directly \eqref{e3d_dofdk4S} with
 all the $q$ in $\P_{k-3}$ (as if $\E$ was a tetrahedron). 
 
%

Note that, in several cases, the {\bf gain} in the number of degrees of freedom
(compared to the general case \eqref{VEMe3d})  will not be due to the reduction
of the degrees of freedom in \eqref{e3d_dofdk4S} using polynomials of degree  $k+1-\eta_{\E}$ instead of the original  $k-1$, but mostly to the choice of using Serendipity edge VEMs on {\it faces}.



\bibliographystyle{amsplain}

\bibliography{general-bibliography}

\end{document}